\newtheorem{thm}{Theorem}[section]
\newtheorem{lem}[thm]{Lemma}
\newtheorem{prop}[thm]{Proposition}
\newtheorem{cor}[thm]{Corollary}
\numberwithin{equation}{section}
\def\R{\mathbb{R}}
\def\Z{\mathbb{Z}}
\def\ra{\rightarrow}
\def\al{\alpha}
\def\ep{\epsilon}
\def\ka{\kappa}
\def\la{\lambda}
\def\om{\omega}
\def\de{\delta}
\begin{document}

\title[Transition fronts in nonlocal Fisher-KPP]{Transition fronts in nonlocal Fisher-KPP equations in time heterogeneous media}

\author{Wenxian Shen}
\address{Department of Mathematics and Statistics, Auburn University, Auburn, AL 36849}
\email{wenxish@auburn.edu}

\author{Zhongwei Shen}
\address{Department of Mathematics and Statistics, Auburn University, Auburn, AL 36849, USA}
\curraddr{Department of Mathematical and Statistical Sciences, University of Alberta, Edmonton, AB T6G 2G1, Canada}
\email{zzs0004@auburn.edu\\zhongwei@ualberta.ca}

\subjclass[2010]{35C07, 47J35, 58D25, 34G20, 92D25}



\keywords{nonlocal Fisher-KPP equation, transition front, regularity, stability}

\begin{abstract}
The present paper is devoted to the study of transition fronts of nonlocal Fisher-KPP equations in time heterogeneous media. We first construct transition fronts with exact decaying rates as the space variable tends to infinity and with prescribed interface location functions, which are natural generalizations of front location functions in homogeneous media. Then, by the general results on space regularity of transition fronts of nonlocal evolution equations proven in the authors' earlier work (\cite{ShSh14-4}), these transition fronts are continuously differentiable in space. We show that their space partial derivatives have exact decaying rates as the space variable tends to infinity. Finally, we study the asymptotic stability of transition fronts. It is shown that transition fronts attract those solutions whose initial data decays as fast as transition fronts near infinity and essentially above zero near negative infinity.
\end{abstract}

\maketitle

\tableofcontents


\section{Introduction}

In the present paper, we study transition fronts of  nonlocal dispersal  equations of the form
\begin{equation}\label{main-eqn}
u_{t}=J\ast u-u+f(t,u),\quad (t,x)\in\R\times\R,
\end{equation}
where the dispersal kernel $J$ satisfies

\begin{itemize}
\item[ \bf(H1)] {\it
$J\not\equiv0$, $J\in C^{1}(\R)$, $J(x)=J(-x)\geq0$ for all $x\in\R$, $\int_{\R}J(x)dx=1$ and
\begin{equation*}
\int_{\R}J(x)e^{\la x}dx<\infty,\quad\int_{\R}|J'(x)|e^{\la x}dx<\infty\quad\text{for any}\,\,\, \la\in\R.
\end{equation*}
}
\end{itemize}

We assume that $f$ is a Fisher-KPP or monostable type nonlinearity, that is, $f$ satisfies

\begin{itemize}
\item[\bf (H2)]
{\it
$f:\R\times[0,1]\ra\R$ is $C^{1}$ and $C^{2}$ in its first variable and second variable, respectively, and satisfies the following conditions:
\begin{itemize}
\item $f(t,0)=f(t,1)=0$ and $f(t,u)\geq0$ for all $(t,u)\in\R\times(0,1)$;
\item there exists $\theta_{1}\in(0,1)$ such that $f_{u}(t,u)\leq0$ for all $u\in[\theta_{1},1]$;
\item $\sup_{(t,u)\in\R\times[0,1]}\max\{|f_{t}(t,u)|,|f_{u}(t,u)|,|f_{uu}(t,u)|\}<\infty$;
\item there exists a $C^{1}$ function $g:[0,1]\to\R$ satisfying
\begin{itemize}
\item $g(0)=g(1)=0<g(u)$ for $u\in(0,1)$,
\item $g_{u}(0)=1$, $g_{u}(1)\geq-1$, $g_{u}$ is decreasing and $\int_{0}^{1}\frac{u-g(u)}{u^{2}}du<\infty$
\end{itemize}
such that
\begin{equation*}
a(t)g(u)\leq f(t,u)\leq a(t)u,\quad (t,u)\in\R\times[0,1],
\end{equation*}
where $a(t):=f_{u}(t,0)$ satisfies $a_{\inf}:=\inf_{t\in\R}a(t)>0$.
\end{itemize}
}
\end{itemize}

The decaying assumptions on $J$ are used implicitly in some places of the present paper. For example, to ensure Proposition \ref{prop-regularity} below, we need $\la\geq r$ (see the proof of \cite[Theorem 1.3]{ShSh14-4}). Here is a typical example of the function $f$, i.e., $f(t,u)=a(t)u(1-u)$.  Equation \eqref{main-eqn} with $J$ and $f$ satisfying (H1) and (H2) arises in, for example, population dynamics modeling the evolution of species. In which case, the unknown function $u(t,x)$ would be the normalized population density, the operator $v\mapsto J\ast v-v$ models nonlocal diffusion and $f(t,u)$ is the logistic-type growth rate. For equation \eqref{main-eqn}, solutions of particular interest are global-in-time front-like solutions, more precisely, transition fronts, since they play the key role in describing extinction and persistence of the species.

Recall that a global-in-time solution $u(t,x)$ of \eqref{main-eqn} is called a (right-moving) \textit{transition front} (connecting $0$ and $1$) in the sense of Berestycki-Hamel (see \cite{BeHa07,BeHa12}, also see \cite{Sh04, Sh11}) if $u(t,x)\in(0,1)$ for all $(t,x)\in\R\times\R$ and there is a function $X:\R\to\R$, called \textit{interface location function}, such that
\begin{equation*}
\lim_{x\to-\infty}u(t,x+X(t))=1\,\,\text{and}\,\,\lim_{x\to\infty}u(t,x+X(t))=0\,\,\text{uniformly in}\,\,t\in\R.
\end{equation*}
The interface location function $X(t)$ tells the position of the transition front as time $t$ elapses, while the uniform-in-$t$ limits shows the \textit{bounded interface width}, that is,
\begin{equation*}
\forall\,\,0<\ep_{1}\leq\ep_{2}<1,\quad\sup_{t\in\R}{\rm diam}\{x\in\R|\ep_{1}\leq u(t,x)\leq\ep_{2}\}<\infty.
\end{equation*}
Thus, transition fronts are proper generalizations of traveling waves in homogeneous media and periodic (or pulsating) traveling waves in periodic media. Notice if $\xi(t)$ is a bounded function, then $X(t)+\xi(t)$ is also an interface location function. Therefore, interface location functions are not unique. But, it is not hard to check that the difference of two interface location functions is a bounded function. Hence, interface location functions are unique up to addition by bounded functions.

We see that transition fronts can be defined in the same way for more general equations, say,
\begin{equation}\label{eqn-general}
u_{t}=J\ast u-u+f(t,x,u).
\end{equation}
Many results have been obtained for \eqref{eqn-general} when $f(t,x,u)$ is of monostable or Fisher-KPP type in various special cases. For example, when $f(t,x,u)=f(u)$, traveling waves and minimal speeds have been studied in \cite{CaCh04,CoDu05,CoDu07,Sc80}. When $f(t,x,u)=f(x,u)$ is periodic in $x$ or $f(t,x,u)$ is periodic in both $t$ and $x$, spreading properties and periodic traveling waves have been studied in \cite{CDM13,RaShZh,ShZh10,ShZh12-1,ShZh12-2}. When $f(t,x,u)=f(x,u)$, while principal eigenvalue, positive solution and long-time behavior of solutions was studied in \cite{BCV14}, transition fronts were shown to exist in \cite{LiZl14}.

In the present paper, we study transition fronts of \eqref{eqn-general} when $f(t,x,u)=f(t,u)$, that is, \eqref{main-eqn}. To state our results, we define for $\ka>0$
\vspace{-0.0in}\begin{equation*}\label{speed-function-intro}
c^{\ka}(t)=\frac{\int_{\R}J(y)e^{\ka y}dy-1}{\ka}t+\frac{1}{\ka}\int_{0}^{t}a(s)ds,\quad t\in\R.
\end{equation*}
We see that if $a(t)\equiv a>0$ is a constant function, then $c^{\ka}(t)$ is nothing but the front location function of traveling waves with speed $\frac{\int_{\R}J(y)e^{\ka y}dy-1+a}{\ka}$ of a homogeneous nonlocal Fisher-KPP equation (see e.g. \cite{CaCh04,CoDu07,Sc80}). Note that since $\inf_{t\in\R}a(t)>0$, $c^{\ka}(t)$ is increasing, and since
$\sup_{(t,u)\in\R\times[0,1]}|f_{t}(t,u)|<\infty$, $c^{\ka}(t)$ is at most linear. Indeed, if $a_{\inf}=\inf_{t\in\R}a(t)$ and $a_{\sup}=\sup_{t\in\R}a(t)$, then
\begin{equation*}
c^{\ka}(t)\in\frac{\int_{\R}J(y)e^{\ka y}dy-1}{\ka}t+\bigg[\frac{a_{\inf}}{\ka}t,\frac{a_{\sup}}{\ka}t\bigg],\quad t\in\R.
\end{equation*}
This function will serve as the interface location function as in the definition of transition fronts.

Our first result concerning the existence, regularity and decaying properties of transition fronts of \eqref{main-eqn} is stated in the following
theorem.

\begin{thm}\label{thm-tf}
Assume (H1) and (H2). There exists $\ka_{0}>0$ such that for any $\ka\in(0,\ka_{0}]$, there is a transition front $u^{\ka}(t,x)$ of \eqref{main-eqn} with interface location function $X^{\ka}(t)=c^{\ka}(t)$ and satisfying the following properties
\begin{itemize}
\item[\rm(i)] $u^{\ka}(t,x)$ is decreasing in $x$ for any $t\in\R$;

\item[\rm(ii)] there holds $\lim_{x\to\infty}\frac{u^{\ka}(t,x+X^{\ka}(t))}{e^{-\ka x}}=1$ uniformly in $t\in\R$;

\item[\rm(iii)] $u^{\ka}(t,x)$ is continuously differentiable in $x$ for any $t\in\R$ and satisfies
\begin{equation*}
\sup_{(t,x)\in\R\times\R}\frac{|u^{\ka}_{x}(t,x)|}{u^{\ka}(t,x)}<\infty;
\end{equation*}

\item[\rm(iv)] there holds $\lim_{x\to-\infty}u^{\ka}_{x}(t,x+X^{\ka}(t))=0$ uniformly in $t\in\R$;

\item[\rm(v)] there holds $\lim_{x\to\infty}\frac{u_{x}^{\ka}(t,x+X^{\ka}(t))}{u^{\ka}(t,x+X^{\ka}(t))}=-\ka$ uniformly in $t\in\R$.
\end{itemize}

\end{thm}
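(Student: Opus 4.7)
The plan is to construct $u^{\ka}$ as a limit of solutions of \eqref{main-eqn} started at times $-n\to-\infty$ with truncated exponential initial data. Set
\[
\phi^{\ka}(t,x):=e^{-\ka(x-c^{\ka}(t))}.
\]
The definition of $c^{\ka}$ is calibrated so that $\partial_{t}\phi^{\ka}=J\ast\phi^{\ka}-\phi^{\ka}+a(t)\phi^{\ka}$, and since $f(t,u)\le a(t)u$, the truncation $\Phi^{\ka}:=\min\{\phi^{\ka},1\}$ is a super-solution of \eqref{main-eqn}. Let $u^{\ka,n}$ solve \eqref{main-eqn} on $[-n,\infty)\times\R$ with $u^{\ka,n}(-n,\cdot)=\Phi^{\ka}(-n,\cdot)$; the comparison principle gives $0\le u^{\ka,n}\le\Phi^{\ka}$. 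A diagonal extraction, using Proposition \ref{prop-regularity} for uniform $C^{1}$ regularity in $x$ together with standard compactness arguments, produces a subsequential limit $u^{\ka}$ that is an entire solution of \eqref{main-eqn} and satisfies $0\le u^{\ka}\le\Phi^{\ka}$.

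Item (i) is inherited from $\Phi^{\ka}$: since $J$ is even and $f$ is $x$-independent, the shifted solutions $u^{\ka,n}(t,\cdot+h)$ satisfy the same equation, and comparison yields monotonicity in $x$, which descends to $u^{\ka}$. The upper bound $u^{\ka}\le\phi^{\ka}$ immediately gives $\limsup_{x\to\infty}e^{\ka x}u^{\ka}(t,x+X^{\ka}(t))\le 1$ uniformly in $t$; the matching $\liminf$, which is the key difficulty, is discussed next. Once this is in hand, a standard KPP-spreading argument together with the nondegeneracy $a_{\inf}>0$ yields $u^{\ka}(t,x+X^{\ka}(t))\to 1$ uniformly in $t$ as $x\to-\infty$, establishing the bounded interface width and confirming $X^{\ka}=c^{\ka}$ as a valid interface location function.

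The main obstacle is the sharp lower asymptotic in (ii). My plan is to construct a sub-solution of the form
\[
\psi(t,x):=\phi^{\ka}(t,x)-A\,\phi^{(1+\nu)\ka}(t,x)
\]
on $\{\psi>0\}$, for a suitably small $\nu>0$ and large $A>0$. The KPP refinement $f(t,u)\ge a(t)u-Ca(t)u^{1+\nu}$ (derivable from $a(t)g(u)\le f(t,u)$ together with the integrability of $(u-g(u))/u^{2}$) reduces the sub-solution inequality to the uniform-in-$t$ condition
\[
(1+\nu)\left(\textstyle\int_{\R}J(y)e^{\ka y}dy-1\right)-\left(\textstyle\int_{\R}J(y)e^{(1+\nu)\ka y}dy-1\right)+\nu a(t)\ge c_{0}>0.
\]
Because $J$ is even, $\ka\mapsto\int_{\R}J(y)e^{\ka y}dy$ has vanishing first derivative at zero, so the two bracketed terms combine to $O(\ka^{2})$; together with $a(t)\ge a_{\inf}>0$, the inequality holds provided $\ka_{0}$ is chosen small enough, which fixes the range $\ka\in(0,\ka_{0}]$. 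With $A$ chosen large enough that $\psi(-n,\cdot)\le\Phi^{\ka}(-n,\cdot)$, comparison gives $u^{\ka,n}\ge\max\{\psi,0\}$, and the sharp lower asymptotic passes to the limit.

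Items (iii)--(v) then follow quickly. Item (iii) is exactly Proposition \ref{prop-regularity}. For (iv), I argue by compactness: if $u^{\ka}_{x}(t_{n},x_{n}+X^{\ka}(t_{n}))$ failed to tend to $0$ along some sequence $x_{n}\to-\infty$ and $t_{n}\in\R$, translating and extracting a limit (using the uniform $C^{1}$ bound in $x$ together with compactness of time shifts of $a(\cdot)$) would produce an entire solution of a limit equation identically equal to $1$ and having a nonzero $x$-derivative at some point, a contradiction. For (v), differentiating \eqref{main-eqn} in $x$ shows that $u^{\ka}_{x}$ solves a linear nonlocal equation with coefficient $f_{u}(t,u^{\ka})$; comparing the decay of $u^{\ka}_{x}$ with that of $-\ka\phi^{\ka}$ via the sharp asymptotic $u^{\ka}\sim\phi^{\ka}$ from (ii) in another translation/limit argument then forces $u^{\ka}_{x}/u^{\ka}\to-\ka$ uniformly as $x\to\infty$.
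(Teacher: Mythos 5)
Your construction of the sharp lower bound (and hence of the transition-front property itself) rests on the ``KPP refinement'' $f(t,u)\ge a(t)u-Ca(t)u^{1+\nu}$, and this is where the argument breaks under the paper's hypotheses: (H2) only gives $f(t,u)\ge a(t)g(u)$ with $g$ satisfying the Uchiyama-type integral condition $\int_{0}^{1}\frac{u-g(u)}{u^{2}}du<\infty$, which does \emph{not} imply a pointwise bound $u-g(u)\le Cu^{1+\nu}$ for any $\nu>0$ (take, e.g., $u-g(u)\sim u(\ln(1/u))^{-2}$ near $0$). Consequently the two-exponential sub-solution $\phi^{\ka}-A\,e^{-(1+\nu)\ka(x-c^{\ka}(t))}$ cannot absorb the nonlinear defect in general, and your key step fails exactly for the class of nonlinearities (H2) is designed to cover. (Two further problems with this ansatz: as written, $\phi^{(1+\nu)\ka}$ in the paper's notation travels with its own speed $c^{(1+\nu)\ka}$ and is an exact solution of the linearization, so the defect you display only appears if you recenter the second exponential at $c^{\ka}(t)$; and even then the resulting sub-solution is a bump of height strictly below $1$, so the uniform-in-$t$ convergence to $1$ behind $X^{\ka}(t)$ is not free --- your appeal to a ``standard KPP-spreading argument'' is a genuine missing step in time-heterogeneous media.) The paper avoids all of this by taking the sub-solution $\psi^{\ka}=h(\phi^{\ka})$, where $h$ is built from the traveling-wave profile of the homogeneous equation $u_{t}=u_{xx}+g(u)$ (the Zlato\v{s} trick); this works under the integral condition on $g$, connects $0$ to $1$, and simultaneously yields the sharp decay $h(e^{-\ka x})\le u(t,x+X(t))\le e^{-\ka x}$ and the behavior behind the front.

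There are secondary gaps as well. Proposition \ref{prop-regularity} applies to entire-in-time transition fronts, not to the approximants $u^{\ka,n}$ on $[-n,\infty)$, so it cannot be invoked for the ``uniform $C^{1}$ regularity'' needed in your diagonal extraction; the paper sidesteps compactness entirely by choosing the sub-solution as initial data so that $u^{n}$ is monotone in $n$ and then passing to the limit in the integrated (Duhamel) form of the equation (with your super-solution data the sequence is monotone decreasing, so the same fix is available, but then the nontriviality of the limit hinges again on the problematic sub-solution). Finally, your sketches for (iv) and (v) via translation/limit-equation arguments require compactness of the time shifts of $a(\cdot)$ (not guaranteed: $a$ is merely bounded and continuous) and convergence of derivatives, and for (v) you would in effect need a Liouville-type classification for the linearized nonlocal equation; the paper instead proves (iv) by a direct Duhamel estimate for $v=u_{x}$ using $f_{u}\le 0$ behind the front and the oddness of $J'$, and (v) by an elementary finite-difference/Taylor argument squeezed between the bounds $h(e^{-\ka x})\le u(t,x+X(t))\le e^{-\ka x}$ from (ii).
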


Note that the continuity of a transition front $u(t,x)$ in the space variable $x$ is not assumed in the definition of transition fronts. But the space regularity of transition fronts plays an important role in the study of other important properties such as stability and uniqueness of transition fronts. In the random dispersal case, the space regularity of transition fronts follows from parabolic Schauder estimates, while, thanks to the lack of space regularity for the nonlocal dispersal equations (that is, the semigroup generated by the nonlocal dispersal operator has no regularizing effect), a transition front of a nonlocal dispersal equation may not be regular in space. We refer the reader to \cite{BaFiReWa97} for the existence of discontinuous traveling waves of $u_{t}=J\ast u-u+f_{B}(u)$, where $f_{B}$ is a balanced bistable nonlinearity. In \cite{ShSh14-4}, we established some very general results on the space regularity of transition fronts of nonlocal dispersal equations. Among others, we proved in \cite{ShSh14-4} the following proposition.



\begin{prop}[\cite{ShSh14-4}]\label{prop-regularity}
Assume (H1) and (H2). Let $w(t,x)$ be an arbitrary transition front of \eqref{main-eqn} satisfying
\begin{equation}\label{harnack-type-intro}
w(t,x)\leq Ce^{r|x-y|}w(t,y),\quad (t,x,y)\in\R\times\R\times\R
\end{equation}
for some $C>0$ and $r>0$. Then, $w(t,x)$ is continuously differentiable in $x$ for any $t\in\R$ and satisfies $\sup_{(t,x)\in\R\times\R}\frac{|w_{x}(t,x)|}{w(t,x)}<\infty$.
\end{prop}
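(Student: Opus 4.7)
My plan is to trade the lack of spatial smoothing of the nonlocal semigroup $e^{t(J\ast\cdot-I)}$ for the $C^{1}$-regularity of the kernel $J$, working through the Duhamel representation of $w$. Since $0\leq w\leq 1$ on $\R\times\R$, the variation-of-constants formula applied to the identity $w_t+w=J\ast w+f(t,w)$, with the initial time sent to $-\infty$, yields
\begin{equation*}
w(t,x)=\int_{-\infty}^{t} e^{-(t-s)}\bigl[(J\ast w)(s,x)+f(s,w(s,x))\bigr]\,ds.
\end{equation*}
The hypothesis $J\in C^{1}(\R)$ together with $\int_{\R}|J'(z)|e^{r|z|}dz<\infty$ from (H1) makes $(J\ast w)(s,\cdot)$ continuously differentiable with $(J\ast w)_x=J'\ast w$, and \eqref{harnack-type-intro} gives the crucial pointwise comparison
\begin{equation*}
|(J'\ast w)(s,x)|\leq C\, w(s,x)\int_{\R}|J'(z)|e^{r|z|}dz=:K_{0}\,w(s,x).
\end{equation*}

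I would then run a difference-quotient argument. For $h\neq 0$, set $\phi_h(t,x)=(w(t,x+h)-w(t,x))/h$; differencing the equation gives the linearization
\begin{equation*}
(\phi_h)_t=(D_h J)\ast w-\phi_h+b_h(t,x)\,\phi_h,\qquad |b_h|\leq L:=\sup|f_u|,
\end{equation*}
where $D_h J(z)=(J(z+h)-J(z))/h$ satisfies $\sup_{|h|\leq 1}\int_{\R}|D_h J(z)|e^{r|z|}dz<\infty$ by (H1). Rescaling Duhamel with a parameter $M>1$ (rewriting $w_t+Mw=J\ast w+f+(M-1)w$) and feeding in the elementary time-Harnack $w(s,x)\leq e^{t-s}w(t,x)$ (which follows from $w_t\geq -w$), one produces an integral inequality for $\phi_h$ whose forcing is dominated pointwise by a multiple of $w(t,x)$ thanks to \eqref{harnack-type-intro}. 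The target is a bound $|\phi_h(t,x)|\leq K_\infty w(t,x)$ with $K_\infty$ independent of $h\in(0,1]$; once this is in hand, equicontinuity in $h$ (inherited from $J\in C^{1}$) and a diagonal extraction produce a pointwise limit $\phi_0=w_x$ that is continuous in $x$ and satisfies the desired estimate.

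The principal obstacle is closing the above integral inequality uniformly in $h$, i.e.\ making the linearized Duhamel a genuine contraction on the weighted space $\{v:\sup_{t,x}|v(t,x)|/w(t,x)<\infty\}$. Combining $|b_h|\leq L$ with the trivial time-Harnack produces the multiplier $(L+M-1)/(M-1)$, which is $\geq 1$ whenever $L\geq 0$, so no contraction emerges from a single global Gronwall argument. To overcome this I would either (i) exploit the transition-front structure --- the bounded interface width and the uniform-in-$t$ limits of $w(t,\cdot+X(t))$ at $\pm\infty$ --- to extract a sharper comparison $w(s,x)\leq e^{\alpha(t-s)}w(t,x)$ with $\alpha<1$, (ii) iterate the estimate on short time intervals where the Duhamel operator is genuinely contractive, or (iii) pass to a time-weighted variant whose weight is tuned to absorb the $b_h$ contribution. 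Any of these refinements identifies the fixed point with $w_x$, giving both $\sup_{(t,x)\in\R\times\R}|w_x(t,x)|/w(t,x)<\infty$ and the continuity of $w_x$ (which then follows from continuity of the integrand in the Duhamel formula for $w_x$).
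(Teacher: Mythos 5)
Your setup (Duhamel representation, difference quotients $\phi_h$, and the use of (H1) together with \eqref{harnack-type-intro} to dominate the $J'$- and $D_hJ$-forcing by a multiple of $w$) is reasonable, but the argument stops exactly where the content of the proposition lies: the uniform-in-$h$ bound $|\phi_h(t,x)|\le K_\infty w(t,x)$ is never established, and that bound is essentially equivalent to the conclusion $\sup_{(t,x)}|w_x(t,x)|/w(t,x)<\infty$. You acknowledge the obstruction yourself, and none of the three proposed remedies closes it as stated. Concretely, the sharpest time-Harnack your ingredients give is $w(s,x)\le e^{\alpha(t-s)}w(t,x)$ with $\alpha=1-c_0$, $c_0=C^{-1}\int_{\R}J(y)e^{-r|y|}dy$ (from $J\ast w\ge c_0 w$ and $f\ge0$); feeding this into the weighted Duhamel/Gronwall scheme, for any shift parameter $M$ and any subdivision of time into short intervals, produces the multiplier $\frac{M-1+L}{M-\alpha}$, which is $<1$ if and only if $L=\sup|f_u|<1-\alpha=c_0$ --- a condition not implied by (H1)--(H2) (already $L\ge a_{\inf}$ can exceed $c_0$, which is small when $C$ is large or $r$ is large). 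So remedy (i) does not suffice, and remedy (ii) is in addition circular: a short-time contraction for the linearized Duhamel map must be seeded with a uniform-in-$h$ control of $\phi_h$ at the earlier time relative to $w$, which a priori is only $O(1/|h|)$ and is precisely what is to be proved; remedy (iii) is not specified. Any scheme that uses only $|b_h|\le L$ together with a global-in-space time-Harnack inequality is blocked by the computation above; the spatial structure of the transition front has to enter the estimate of $\phi_h$ itself (for instance, behind the interface $w\ge\theta_1$ and (H2) gives $f_u\le0$, so $-1+b_h\le-1$ is genuinely dissipative there, exactly as this paper exploits in proving Theorem \ref{thm-tf}(iv), while ahead of the interface a different mechanism tied to the decay of $w$ is needed), not merely through the constant $\alpha$.

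A secondary gap is the endgame: a uniform bound on difference quotients only yields Lipschitz continuity of $w(t,\cdot)$, hence differentiability almost everywhere; to obtain $w_x$ existing at every point and continuous in $x$ you must show the full limit $\lim_{h\to0}\phi_h$ exists (e.g.\ by estimating $\phi_h-\phi_{h'}$ in the same weighted norm, or by identifying the limit with the solution of the pointwise linear ODE $v_t=(-1+f_u(t,w))v+J'\ast w$), and this requires the same type of quantitative estimate that is missing above; ``equicontinuity in $h$ and a diagonal extraction'' does not deliver it. Note also that the paper itself does not prove this proposition here: it quotes it from \cite{ShSh14-4} (Theorem 1.3 there), so your argument would in any case have to reproduce the substance of that external proof, and as written it does not.
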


At this point,  we mention that the regularity of pulsating fronts for nonlocal KPP equations in the space periodic case was treated in \cite{CDM13} (see also \cite{ShZh12-2}). We remark that Theorem \ref{thm-tf}$\rm(iii)$ follows directly from Proposition \ref{prop-regularity}. We point out  that the existence of transition fronts in Theorem \ref{thm-tf} is proven constructively via the construction of appropriate sub- and super-solutions. The $\ka_0$ in Theorem \ref{thm-tf} is obtained in the construction of sub-solutions 
(see Proposition \ref{prop-subsol}) and satisfies that $\ka_{0}<\inf_{t\in\R}\ka_{0}(t)$ (see \eqref{an-relation-parameter-ka}), where $\ka_{0}(t)>0$ is such that 
$$
\frac{\int_{\R}J(y)e^{\ka_{0}(t) y}dy-1+a(t)}{\ka_{0}(t)}=\min_{\ka>0}\frac{\int_{\R}J(y)e^{\ka y}dy-1+a(t)}{\ka}.
$$
The $\ka_0$ in Theorem \ref{thm-tf}
 may be small and  hence the set of transition fronts obtained in Theorem \ref{thm-tf} may only contain  those which move sufficiently fast to the right. In \cite{RaShZh}, the authors  proved the existence of periodic traveling waves in the time periodic case $f(t+T,u)=f(t,u)$ also constructively via the construction of appropriate sub- and super-solutions. The $\ka_0$ obtained in \cite{RaShZh} is given by
\begin{equation}\label{k0-eq}
\frac{\int_{\R} J(y)e^{\ka_{0} y}dy-1+\hat a}{\ka_{0}}=\min_{\ka>0}\frac{\int_{\R} J(y)e^{\ka y}dy-1+\hat a}{\ka},
\end{equation}
where $\hat a=\frac{1}{T}\int_0^T a(t)dt$. The $\ka_0$ in \eqref{k0-eq} for the time periodic case is optimal and the value in \eqref{k0-eq} is nothing but the minimal speed of periodic traveling waves. But the method to construct sub-solutions in the time periodic case adopted in \cite{RaShZh} is difficult to be applied to the general time dependent case. We adopt in the present paper a method based on an idea from \cite{Zl12} (also see \cite{LiZl14,TaZhZl14}), which is different from that in \cite{RaShZh} and allows us to apply it to the general time dependent case, but does not enable us to obtain the optimal value of $\ka_0$.  It would be interesting to determine the optimal value for $\ka_{0}$ (see Subsection \ref{subsec-some-remarks} for some remarks). 

We also remark that if $a(t)=f_{u}(t,0)$ is uniquely ergodic, that is, the hull of $a(t)$ is compact and the dynamical system generated by the shift operators on the hull of $a(t)$ is uniquely ergodic, then the limit $\lim_{t\to\infty}\frac{1}{t}\int_{0}^{t}a(s)ds$ exists, and hence, the asymptotic speed
\begin{equation*}
\lim_{t\to\infty}\frac{X^{\ka}(t)}{t}=\lim_{t\to\infty}\frac{c^{\ka}(t)}{t}=\frac{\int_{\R}J(y)e^{\ka y}dy-1}{\ka}+\frac{1}{\ka}\lim_{t\to\infty}\frac{1}{t}\int_{0}^{t}a(s)ds
\end{equation*}
exists. Since interface location functions are unique up to addition by bounded functions as mentioned before, asymptotic speed (if exists) is independent of the choice of interface location functions. Note that asymptotic speed hardly exists in general.

In the presence of space regularity, i.e., Theorem \ref{thm-tf}$\rm(iii)$, we then move to the study of the asymptotic stability of $u^{\ka}(t,x)$. To do so, we further assume the uniform exponential stability of the constant solution $1$, that is,

\begin{itemize}
\item[\bf (H3)] {\it
There exists $\theta_{1}\in(0,1)$ and $\beta_{1}>0$ such that $f_{u}(t,u)\leq-\beta_{1}$ for all $(t,u)\in\R\times[\theta_{1},1]$.
}
\end{itemize}

Note that (H3)  improves the corresponding assumption in (H2). From classical semigroup theory and comparison principle, we know that for any $u_{0}\in C^{b}_{\rm unif}(\R)$, the space of real-valued, bounded and uniformly continuous functions on $\R$, the solution $u(t,x;t_{0},u_{0})$ of \eqref{main-eqn} with initial data $u(t_{0},\cdot;t_{0},u_{0})=u_{0}$ exists globally in the space $C^{b}_{\rm unif}(\R)$ and is unique. We then show

\begin{thm}\label{thm-asympt-stability}
Assume (H1)-(H3). Let $\ka_{0}$ be as in Theorem \ref{thm-tf}. For $\ka\in(0,\ka_{0}]$, let $u^{\ka}(t,x)$ be the transition front in Theorem \ref{thm-tf}. Let $u_{0}:\R\to[0,1]$ be uniformly continuous and satisfy
\begin{equation*}
\liminf_{x\to-\infty}u_{0}(x)>0\quad\text{and}\quad\lim_{x\to\infty}\frac{u_{0}(x)}{u^{\ka}(t_{0},x)}=1
\end{equation*}
for some $t_{0}\in\R$. Then, there holds the limit
\begin{equation}\label{stability-intro}
\lim_{t\to\infty}\sup_{x\in\R}\bigg|\frac{u(t,x;t_{0},u_{0})}{u^{\ka}(t,x)}-1\bigg|=0.
\end{equation}
\end{thm}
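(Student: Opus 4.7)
The plan is to apply the comparison principle in the form of a squeeze: for each $\ep>0$, I will construct a super-solution $\bar u_\ep$ and a sub-solution $\underline u_\ep$ of \eqref{main-eqn} on $[T_\ep,\infty)\times\R$ for some $T_\ep\ge t_0$, with
\[
\underline u_\ep(T_\ep,\cdot)\le u(T_\ep,\cdot;t_0,u_0)\le \bar u_\ep(T_\ep,\cdot),\qquad (1-C\ep)u^\ka\le \underline u_\ep\le \bar u_\ep\le (1+C\ep)u^\ka\ \text{on}\ [T_\ep,\infty)\times\R,
\]
for an absolute constant $C$. Dividing by $u^\ka(t,x)>0$ and letting $\ep\to 0$ then yields \eqref{stability-intro}.

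For the super-solution I would try the multiplicative ansatz $\bar u_\ep(t,x)=\min\{1,(1+\ep)u^\ka(t,x)\}$, or, if that is not quite a super-solution uniformly in $u^\ka$, a slight modification such as $(1+\ep\,e^{-\mu(t-T_\ep)})u^\ka$ truncated at $1$. The verification of the super-solution inequality wherever the $\min$ equals the second argument combines the sublinear upper bound $f(t,u)\le a(t)u$ in (H2) (which handles the tail $u^\ka\in[0,\theta_1]$) with the exponential stability $f_u(t,u)\le -\be_1$ on $[\theta_1,1]$ from (H3) (which handles the back). In the complementary region $\bar u_\ep\equiv 1$ is a super-solution because $f(t,1)=0$ and $J\ast 1-1=0$. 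The initial ordering $u(T_\ep,\cdot)\le\bar u_\ep(T_\ep,\cdot)$ follows from $u\le 1$ globally, the tail assumption $u_0/u^\ka(t_0,\cdot)\to 1$, and continuous dependence on the finite interval $[t_0,T_\ep]$.

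The sub-solution is the main obstacle. A natural candidate in the tail region $\{x\ge X^\ka(t)-M\}$ is
\[
\underline u_\ep(t,x)=\max\bigl\{0,\ (1-\ep)u^\ka(t,x)-A\,u^\ka(t,x)^{1+\ga}\bigr\},
\]
with small $\ga>0$ and large $A>0$, glued to a back sub-solution $1-\eta(t)$ where $\eta$ decays exponentially at rate $\be_1$ by (H3). The sub-solution inequality in the tail is driven by the lower bound $f(t,u)\ge a(t)g(u)\ge a(t)(u-C u^{1+\ga})$ for small $u$, which follows from the normalization $g_u(0)=1$, the monotonicity of $g_u$, and the integrability $\int_0^1(u-g(u))/u^2\,du<\infty$ in (H2). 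Matching to the initial data requires a preparatory step on $[t_0,T_\ep]$: in the back, the hair-trigger effect (from $f\ge a(t)g(u)$ with $a_{\inf}>0$) together with (H3) upgrades the hypothesis $\liminf_{x\to-\infty}u_0>0$ to $u(T_\ep,x)\ge 1-\ep$ on arbitrarily large left half-lines; in the tail, continuous dependence on the bounded interval $[t_0,T_\ep]$ preserves the ratio $u_0/u^\ka(t_0,\cdot)\to 1$ up to an error of order $\ep$.

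The principal difficulties are twofold: (i) verifying the super- and sub-solution inequalities through the nonlocal operator $J\ast(\cdot)-(\cdot)$, which has no regularizing effect---this is where Proposition~\ref{prop-regularity} together with Theorem~\ref{thm-tf}(iii)--(v) becomes crucial, as they allow one to control $J\ast u^\ka$, $J\ast (u^\ka)^{1+\ga}$, and the $\min/\max$-gluings pointwise in terms of $u^\ka$ itself through the ratio bounds on $|u^\ka_x|/u^\ka$; and (ii) executing the preparatory step, which must simultaneously upgrade the weak positivity $\liminf u_0>0$ in the back to a near-$1$ bound uniformly on expanding left half-lines while preserving the delicate tail asymptotics $u_0(x)\sim u^\ka(t_0,x)$ needed for the squeeze on the right.
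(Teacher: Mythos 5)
Your plan has a genuine gap at the two places where the paper's proof does its real work. First, the barriers you propose are purely multiplicative in $u^\ka$ (with possibly a decaying factor $1\pm\ep e^{-\mu(t-T_\ep)}$, but no spatial shift). Under (H2)--(H3) alone, $f$ is not assumed concave in $u$, so $(1+\ep)u^\ka$ need not be a super-solution: in the tail the linear terms cancel exactly ($f(t,u)\approx a(t)u$, $f(t,(1+\ep)u)\approx a(t)(1+\ep)u$) and the sign of the remainder is not controlled, and the same problem kills the time-decaying version because the error $\om u + [f_u(t,u_*)-f_u(t,0)]u$ has no favorable sign for fixed $\om>0$. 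This is precisely why Proposition \ref{prop-sub-sup-solution} couples the factor $(1\pm\ep e^{-\om(t-\tau)})$ with a decaying \emph{spatial shift} $\pm C\ep e^{-\om(t-\tau)}$, whose contribution $C\om\ep e^{-\om(t-\tau)}u_x$ absorbs the error via the uniform steepness of Lemma \ref{lem-uniform-steepness} in the middle and via $u_x/u\le -\ka/2$ (Theorem \ref{thm-tf}(v)) in the tail. Your appeal to Proposition \ref{prop-regularity} and Theorem \ref{thm-tf}(iii)--(v) points in the right direction, but without the shift the inequality simply does not close. (A smaller technical slip: $g(u)\ge u-Cu^{1+\ga}$ does not follow from $\int_0^1(u-g(u))u^{-2}\,du<\infty$; e.g. $u-g(u)=u/(\log(1/u))^2$ satisfies the integrability but no power bound.)

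Second, and more fundamentally, even with correct barriers your squeeze requires the initial ordering $u(T_\ep,\cdot;t_0,u_0)\le(1+\ep)u^\ka(T_\ep,\cdot)$ \emph{with no shift}, and this cannot be extracted from the hypotheses: the data are controlled only as $x\to\pm\infty$, while in the intermediate region $u_0$ may lie far above $(1+\ep)u^\ka(t_0,\cdot)$ (e.g. $u_0=1/2$ where $u^\ka(t_0,\cdot)$ is already small), and neither waiting a finite time nor continuous dependence creates such an ordering. The paper therefore starts from an ordering with a large shift, $u_0(x)\le(1+\ep_*)u(t_0,x-\xi_*+C\ep_*)$, which the barriers propagate, and the entire difficulty is then the removal of the shift: one shows the set $\Xi=\{\xi\ge0:\limsup_{t\to\infty}\sup_x u(t,x;t_0)/u(t,x-\xi)\le1\}$ is nonempty and closed at its infimum, and that $\xi_{\inf}=0$ by a contradiction argument whose key input is Lemma \ref{lem-asymptotic-at-infty}: the solution $u(t,x;t_0)$ has the \emph{same exact exponential tail} as the front, uniformly in $t\ge t_0$, proven by comparison with $e^{-\ka(x-X(t))}\pm\de e^{-\ka_*(x-X(t))}$ (and $h(e^{-\ka(x-X(t))})-\de e^{-\ka_*(x-X(t))}$) for an auxiliary exponent $\ka_*\in(\ka_0,\inf_t\ka_0(t)]$, using \eqref{an-relation-parameter-ka} and the convexity of the dispersion relation. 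Your proposal contains no analogue of this shift-removal mechanism or of the uniform tail lemma, so as written it establishes at best convergence to a shifted front, not \eqref{stability-intro}.
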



A generalization of Theorem \ref{thm-asympt-stability} is given in Corollary \ref{cor-stability-general}, where we show that if $u_{0}$ is as in Theorem \ref{thm-asympt-stability}, but with $\lim_{x\to\infty}\frac{u_{0}(x)}{u^{\ka}(t_{0},x)}=1$ replaced by $\lim_{x\to\infty}\frac{u_{0}(x)}{u^{\ka}(t_{0},x)}=\la$ for some $\la>0$, then there holds $\lim_{t\to\infty}\sup_{x\in\R}\big|\frac{u(t,x;t_{0},u_{0})}{u^{\ka}(t,x-\frac{\ln\la}{\ka})}-1\big|=0$.

More generally, if $\lim_{x\to\infty}\frac{u_{0}(x)}{e^{-\ka x}}=\tilde{\la}>0$, then, using Theorem \ref{thm-tf}$\rm(ii)$ and the facts that $X^{\ka}(t)$ is continuous, increasing and satisfies $\lim_{t\to\pm\infty}X^{\ka}(t)=\pm\infty$, there exists a unique $\tilde{t}_{0}\in\R$ such that the limit $\lim_{x\to\infty}\frac{u_{0}(x)}{u^{\ka}(\tilde{t}_{0},x)}=1$ exists, which leads to the asymptotic dynamics of $u(t,x;\tilde{t}_{0},u_{0})$ as in \eqref{stability-intro} with $t_{0}$ replaced by $\tilde{t}_{0}$. See Corollary \ref{cor-stability-more-general} for more details. In particular, if $a(t)$ is uniquely ergodic, then $u(t,x;\tilde{t}_{0},u_{0})$ has asymptotic spreading properties in the following sense: if $\lim_{t\to\infty}\frac{X^{\ka}(t)}{t}=c^{\ka}_{*}$, then for any $\ep>0$ there holds
\begin{equation*}
\lim_{t\to\infty}\inf_{x\leq (c_{*}^{\ka}-\ep)t}u(t,x;\tilde{t}_{0},u_{0})=1\quad\text{and}\quad\lim_{t\to\infty}\inf_{x\geq (c_{*}^{\ka}+\ep)t}u(t,x;\tilde{t}_{0},u_{0})=0.
\end{equation*}

We remark that results as in Theorem \ref{thm-tf} and Theorem \ref{thm-asympt-stability} can also be established for the following reaction-diffusion equation
\begin{equation}\label{eqn-rd}
u_{t}=u_{xx}+f(t,u),\quad (t,x)\in\R\times\R.
\end{equation}
In particular, we have
\begin{thm}
Assume $\rm(H2)$ and $\rm(H3)$. There exists $\ka_{0}>0$ such that for any $\ka\in(0,\ka_{0}]$, there is a transition front $u^{\ka}(t,x)$ of \eqref{eqn-rd} with interface location function
\begin{equation*}
X^{\ka}(t)=\ka t+\frac{1}{\ka}\int_{0}^{t}a(s)ds,\quad t\in\R
\end{equation*}
and satisfying the following properties
\begin{itemize}
\item[\rm(i)] $u^{\ka}_{x}(t,x)<0$ for all $(t,x)\in\R\times\R$;

\item[\rm(ii)] the limits $\lim_{x\to\infty}\frac{u^{\ka}(t,x+X^{\ka}(t))}{e^{-\ka x}}=1$ and $\lim_{x\to\infty}\frac{u_{x}^{\ka}(t,x+X^{\ka}(t))}{u^{\ka}(t,x+X^{\ka}(t))}=-\ka$ hold and are uniform in $t\in\R$;

\item[\rm(iii)] let $u_{0}:\R\to[0,1]$ be uniformly continuous and satisfy $\liminf_{x\to-\infty}u_{0}(x)>0$ and $\lim_{x\to\infty}\frac{u_{0}(x)}{e^{-\ka x}}=\la$ for some $\la>0$; then, there exists $t_{0}\in\R$ such that
\begin{equation*}
\lim_{t\to\infty}\sup_{x\in\R}\bigg|\frac{u(t,x;t_{0},u_{0})}{u^{\ka}(t,x)}-1\bigg|=0.
\end{equation*}
\end{itemize}
\end{thm}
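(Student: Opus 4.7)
The plan is to mirror the constructions of Theorem \ref{thm-tf} and Theorem \ref{thm-asympt-stability} in the classical diffusion setting, taking advantage of the fact that parabolic Schauder estimates automatically deliver the space regularity so that Proposition \ref{prop-regularity} is not even needed. The candidate interface $X^{\ka}(t)$ is forced by linearization at $0$: plugging $e^{-\ka(x-X^{\ka}(t))}$ into $u_{t}=u_{xx}+a(t)u$ gives $\dot X^{\ka}(t)=\ka+a(t)/\ka$, which integrates to the expression in the statement.

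For parts (i) and (ii), I would first construct a super-solution $\bar u^{\ka}(t,x)=\min\{e^{-\ka(x-X^{\ka}(t))},1\}$, whose sub-solution property on each branch follows from the KPP bound $f(t,u)\leq a(t)u$ and the choice of $X^{\ka}$. For the matching sub-solution I would adapt the Zlat\'o\v{s}-type ansatz used in the proof of the nonlocal Theorem \ref{thm-tf}, namely $\underline u^{\ka}(t,x)=\max\{e^{-\ka\xi}-d\,e^{-(\ka+\ep)\xi},0\}$ with $\xi=x-X^{\ka}(t)$ and $\ep,d>0$ small, and use the pointwise inequality $f(t,u)\geq a(t)g(u)$ together with the convexity-type properties of $g$ in (H2) to fix $\ka_{0},\ep,d$ so that $\underline u^{\ka}$ is a genuine sub-solution. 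I would then solve \eqref{eqn-rd} from times $s_{n}\to-\infty$ with initial data sandwiched between $\underline u^{\ka}$ and $\bar u^{\ka}$; parabolic comparison preserves the sandwich on $[s_{n},\infty)$, interior Schauder estimates give a locally smooth subsequential limit $u^{\ka}$, and the sandwich survives to yield a transition front with the exact decay $u^{\ka}(t,x+X^{\ka}(t))/e^{-\ka x}\to 1$. Strict monotonicity in $x$ passes to the limit (and is strict by the strong maximum principle), while differentiating the equation and applying Schauder bounds to $u^{\ka}_{x}$ yields the logarithmic-derivative limit $-\ka$.

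For part (iii), I would reuse the strategy behind Theorem \ref{thm-asympt-stability} and Corollary \ref{cor-stability-more-general}. From part (ii) and the continuous increasing bijection $X^{\ka}:\R\to\R$, the limit $\lim_{x\to\infty}u_{0}(x)/e^{-\ka x}=\la$ selects a unique $t_{0}\in\R$ with $\lim_{x\to\infty}u_{0}(x)/u^{\ka}(t_{0},x)=1$, and $\liminf_{x\to-\infty}u_{0}(x)>0$ controls the behavior near $-\infty$. I would then construct shifted barriers of the form $u^{\pm}(t,x)=(1\pm\de(t))\,u^{\ka}(t,x\mp\si(t))$, where $\de(t)$ decays exponentially and $\si(t)$ converges to a finite shift, tuned so that on $[t_{0},\infty)$ they are super- and sub-solutions; the verification splits into the tail region near $+\infty$, controlled by (ii), and the region where $u^{\ka}$ is close to $1$, controlled by the uniform exponential contraction at $1$ coming from (H3). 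Parabolic comparison yields $u^{-}\leq u(\cdot,\cdot;t_{0},u_{0})\leq u^{+}$, and sending the parameters to zero gives the uniform limit.

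The main obstacle is again the sub-solution construction, exactly as in the nonlocal case: the ansatz must remain a legitimate sub-solution across the cutoff, the algebraic inequality obtained after substituting into $\pa_{t}-\pa_{xx}-f$ must hold uniformly in $t\in\R$, and this forces $\ka_{0}$ to be strictly below the linearized threshold $\sqrt{a_{\inf}}$, so one does not recover the optimal range in general. A secondary technical point is calibrating the decay rate of $\de(t)$ and the shift $\si(t)$ in the stability argument so that the barriers genuinely dominate $u(\cdot,\cdot;t_{0},u_{0})$ on all of $\R$ rather than only in the tails; this is where (H3) is indispensable.
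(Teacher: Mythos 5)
Your overall scheme (exponential super-solution, a sub-solution moving with the same interface $X^{\ka}(t)$, approximation from initial times tending to $-\infty$, comparison, then stability via perturbed and shifted fronts) is exactly the paper's, but the specific sub-solution you name is not the paper's ansatz and it fails under (H2) as stated. The two-exponential barrier $\max\{e^{-\ka\xi}-d\,e^{-(\ka+\ep)\xi},0\}$ requires, after substituting into $\pa_t-\pa_{xx}-f$, an inequality of the form $a(t)\,(v-g(v))\le c_0\,d\,e^{-(\ka+\ep)\xi}$ on the region where $v>0$, and hence a power-type gap $u-g(u)\le Cu^{1+\de}$ with $\ep\le\ka\de$. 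Hypothesis (H2) only gives the Uchiyama-type condition $\int_0^1\frac{u-g(u)}{u^2}\,du<\infty$, which admits concave $g$ with $u-g(u)\sim u/\ln^{2}(1/u)$; for such $g$ the required inequality fails for large $\xi$, since $e^{-\ka\xi}/\xi^{2}$ is not dominated by $d\,e^{-(\ka+\ep)\xi}$. This is precisely why the paper (following Zlato\v{s}) does not use two exponentials but the composition $\psi^{\ka}(t,x)=h\big(e^{-\ka(x-X^{\ka}(t))}\big)$, with $h=h_{\al}$ built from the traveling wave profile of $u_t=u_{xx}+g(u)$. In the local case this is even simpler than in the nonlocal one: using the ODE \eqref{eqn-h} one gets $\psi_t-\psi_{xx}=a(t)g(\psi)+s^{2}h''(s)\big(a(t)\al^{2}-\ka^{2}\big)$ with $s=e^{-\ka\xi}$, so $\psi^{\ka}$ is a sub-solution of $u_t=u_{xx}+a(t)g(u)\le u_{xx}+f(t,u)$ as soon as $\ka\le\al\sqrt{a_{\inf}}$ (with $\al=\tfrac34$, say), no Taylor-remainder estimate needed. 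You should replace your ansatz by this one (or add the hypothesis $g(u)\ge u-Cu^{1+\de}$, which the paper does not assume).

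The stability part is also too optimistic as sketched. The shifted barriers $(1\pm\de(t))u^{\ka}(t,x\mp\si(t))$ (the analogue of Proposition \ref{prop-sub-sup-solution}) only give that $u(t,\cdot;t_0,u_0)$ is eventually trapped between finite translates of the front: since $u_0$ merely satisfies $u_0(x)/u^{\ka}(t_0,x)\to1$, the initial perturbation size $\ep_*$ and shift $\xi_*$ cannot be taken arbitrarily small, so ``sending the parameters to zero'' is not available directly. The paper closes this gap with the infimum-shift argument on the set $\Xi$ together with the sharp tail asymptotics of the Cauchy solution, Lemma \ref{lem-asymptotic-at-infty}, which is itself proved with auxiliary barriers $e^{-\ka(x-X(t))}\pm\de e^{-\ka_*(x-X(t))}$ and uses the strict inequality between $\ka_0$ and the minimizing exponent (here $\ka_0<\sqrt{a_{\inf}}$, the RD analogue of \eqref{an-relation-parameter-ka}), plus the uniform steepness of the front (Lemma \ref{lem-uniform-steepness}). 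You need to state and prove these two ingredients for \eqref{eqn-rd}; with them, your choice of $t_0$ via $\lim_{x\to\infty}u_0(x)/e^{-\ka x}=\la$ and the rest of the argument goes through as in Corollary \ref{cor-stability-more-general}.
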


It should be pointed out that transition fronts of \eqref{eqn-rd} were established in \cite{NaRo12}, while no result concerning the stability exists in the literature. In the case $f(t,u)$ being uniquely ergodic in $t$, the existence, stability and uniqueness of transition fronts of \eqref{eqn-rd} were studied in \cite{Sh11}.

Finally, we remark that while transition fronts of reaction-diffusion equations and nonlocal equations of Fisher-KPP type in general time or space heterogeneous media have been studied (see e.g. \cite{LiZl14,NaRo12,NRRZ12,TaZhZl14,Zl12}), there exists no result in the literature concerning the corresponding discrete equations in general time or space heterogeneous media, i.e.,
\begin{equation}\label{discrete-time}
\dot{u}_{i}=u_{i+1}-2u_{i}+u_{i-1}+f(t,u_{i}),\quad t\in\R,\,\,i\in\Z,
\end{equation}
or
\begin{equation}\label{discrete-space}
\dot{u}_{i}=u_{i+1}-2u_{i}+u_{i-1}+f(i,u_{i}),\quad t\in\R,\,\,i\in\Z,
\end{equation}
where $f(t,u)$ and $f(i,u)$ are of Fisher-KPP type. Such discrete equations also arises naturally in applications (see e.g. \cite{ShSw90}), and hence, it is of great importance to study them. We refer the reader to \cite{CFG06,ChGu02,ChGu03,FGS02,HuZi94,WuZo97,ZHH93} and references therein for works of \eqref{discrete-time} or \eqref{discrete-space} in homogeneous media, i.e., $f(t,u)=f(u)$ or $f(i,u)=f(u)$, and to \cite{GuHa06,GuWu09} for works of \eqref{discrete-space} in periodic media, i.e., $f(i,u)=f(i+L,u)$ for some $L\in\Z$.

The rest of the paper is organized as follows. In Section \ref{sec-sub-super-sol}, we construct appropriate global-in-time sub- and super-solutions of \eqref{main-eqn} for the use to construct transition fronts. In Section \ref{sec-construct-tf}, we construct transition fronts and prove Theorem \ref{thm-tf}. In Section \ref{sec-stability}, we study the stability of transition fronts constructed in Theorem \ref{thm-tf} and prove Theorem \ref{thm-asympt-stability}.


\section{Construction of sub- and super-solutions}\label{sec-sub-super-sol}

In this section, we construct appropriate global-in-time sub- and super-solutions of the equation \eqref{main-eqn}. Throughout this section, we assume (H1) and (H2).

\subsection{Construction of the super-solution}

For $\ka>0$, let
\begin{equation}\label{speed-function}
c^{\ka}(t)=\frac{\int_{\R}J(y)e^{\ka y}dy-1}{\ka}t+\frac{1}{\ka}\int_{0}^{t}a(s)ds,\quad t\in\R.
\end{equation}

For $\ka>0$, define
\begin{equation}\label{supsol}
\phi^{\ka}(t,x)=e^{-\ka(x-c^{\ka}(t))},\quad (t,x)\in\R\times\R.
\end{equation}

\begin{prop}\label{prop-subsol}
For any $\ka>0$, $\phi^{\ka}(t,x)$ is a super-solution of \eqref{main-eqn}.
\end{prop}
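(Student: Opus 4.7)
The plan is to verify the super-solution inequality by direct computation, exploiting that $\phi^{\ka}$ has the separable form $e^{-\ka x}e^{\ka c^{\ka}(t)}$, so the nonlocal operator reduces to a multiplication and the whole calculation collapses to a scalar identity. Specifically, I aim to show the pointwise equality
\begin{equation*}
\phi^{\ka}_t - J\ast\phi^{\ka} + \phi^{\ka} = a(t)\phi^{\ka},
\end{equation*}
and then invoke the bound $f(t,u)\leq a(t)u$ from (H2) to conclude.

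First I would differentiate $c^{\ka}(t)$ in \eqref{speed-function} to get $(c^{\ka})'(t) = \ka^{-1}\bigl(\int_{\R} J(y)e^{\ka y}\,dy - 1 + a(t)\bigr)$, whence by the chain rule
\begin{equation*}
\phi^{\ka}_t(t,x) = \ka(c^{\ka})'(t)\phi^{\ka}(t,x) = \Bigl(\int_{\R}J(y)e^{\ka y}\,dy - 1 + a(t)\Bigr)\phi^{\ka}(t,x).
\end{equation*}
Next, writing $\phi^{\ka}(t,y) = \phi^{\ka}(t,x)e^{-\ka(y-x)}$ and changing variables $z = y-x$, together with the evenness of $J$ (which makes $\int_{\R} J(z)e^{-\ka z}\,dz = \int_{\R} J(z)e^{\ka z}\,dz$), yields
\begin{equation*}
J\ast\phi^{\ka}(t,x) = \phi^{\ka}(t,x)\int_{\R} J(z)e^{\ka z}\,dz.
\end{equation*}
Subtracting then immediately produces the claimed identity with $a(t)\phi^{\ka}$ on the right-hand side.

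The super-solution conclusion then reads: wherever $\phi^{\ka}(t,x)\in[0,1]$, the KPP bound $f(t,u)\leq a(t)u$ gives $\phi^{\ka}_t - J\ast\phi^{\ka} + \phi^{\ka} = a(t)\phi^{\ka} \geq f(t,\phi^{\ka})$. In the region where $\phi^{\ka}(t,x)>1$, the comparison is vacuous since solutions of \eqref{main-eqn} we wish to dominate take values in $[0,1]$; equivalently, one may pass to $\min\{\phi^{\ka},1\}$, which inherits the super-solution property because $f(t,1)=0$ from (H2). I do not anticipate a real obstacle: the whole proposition reduces to the two line calculation above together with the linearized bound on $f$, and the only subtle point, namely the status of the inequality where $\phi^{\ka}>1$, is a standard truncation remark.
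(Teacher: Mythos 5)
Your proposal is correct and is essentially the paper's own argument: verify that $\phi^{\ka}$ solves the linearized equation $\phi_{t}=J\ast\phi-\phi+a(t)\phi$ (your explicit computation of $\dot c^{\ka}$ and of the convolution is exactly this check) and then invoke $f(t,u)\leq a(t)u$ from (H2). Your handling of the region $\phi^{\ka}>1$ matches the paper's remark that one may extend $f$ with $f(t,u)\leq a(t)u$ for $u>1$ (equivalently, work with $\min\{1,\phi^{\ka}\}$), so there is no gap.
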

\begin{proof}
We check that $\phi^{\ka}(t,x)$ solves $\phi_{t}=J\ast\phi-\phi+a(t)\phi$. The proposition then follows from $f(t,u)\leq a(t)u$ for $u\geq0$ by (H2) (note it is safe to extend $f(t,u)$ to $u\in(1,\infty)$ in this way).
\end{proof}


\subsection{Construction of the sub-solution}

To construct the sub-solution, we borrow an idea from \cite{Zl12} (also see \cite{LiZl14,TaZhZl14}). Let us consider the homogeneous reaction-diffusion equation
\begin{equation}\label{eqn-homo-g}
u_{t}=u_{xx}+g(u),
\end{equation}
where $g$, given in (H2), is of Fisher-KPP type with $g_{u}(0)=1$. It is known (see e.g. \cite{Uch78}) that for any $\al\in(0,1)$, traveling wave of the form $\phi_{\al}(x-c_{\al}t)$ with $\phi_{\al}(-\infty)=1$ and $\phi_{\al}(\infty)=0$ exists and is unique up to space translation, where $c_{\al}=\al+\frac{1}{\al}$. We assume, without loss of generality, that $\lim_{x\to\infty}e^{\al x}\phi_{\al}(x)=1$. Moreover, for any $\al\in(0,1)$, the exponential function $e^{-\al(x-c_{\al}t)}$ is a traveling wave of the linearization of \eqref{eqn-homo-g} at $0$, that is,
\begin{equation}\label{eqn-homo-g-linear}
u_{t}=u_{xx}+u.
\end{equation}
For $\al\in(0,1)$, we define a function $h_{\al}:[0,\infty)\to[0,1)$ by setting
\begin{equation*}
h_{\al}(s)=\begin{cases}
0,\quad&s=0,\\
\phi_{\al}(-\al^{-1}\ln s),\quad&s>0.
\end{cases}
\end{equation*}
Note that $h_{\al}(e^{-\al x})=\phi_{\al}(x)$, that is, $h_{\al}$ takes the profile of traveling waves of \eqref{eqn-homo-g-linear} with speed $c_{\al}$ to that of \eqref{eqn-homo-g} with the same speed. Since $\phi_{\al}$ solves $\phi_{\al}''+c_{\al}\phi_{\al}'+g(\phi_{\al})=0$, we check that $h_{\al}$ satisfies
\begin{equation}\label{eqn-h}
\al^{2}s^{2}h_{\al}''(s)-sh_{\al}'(s)+g(h_{\al}(s))=0.
\end{equation}

\begin{lem}\label{the-function-h}
Let $\al\in(0,1)$. Then,
\begin{itemize}
\vspace{-0,05in}\item[\rm(i)] $h_{\al}$ is increasing and satisfies $\lim_{s\to\infty}h_{\al}(s)=1$, $h_{\al}'(0)=1$ and $h_{\al}''(s)<0$ for $s>0$;
\item[\rm(ii)] let $\beta=2+\frac{1}{\al^{2}}$ and $\rho_{\al}(x)=-h_{\al}''(e^{-x})$ for $x\in\R$; then there holds
\begin{equation*}
0<\rho_{\al}(x)\leq e^{\beta|x-y|}\rho_{\al}(y),\quad x,y\in\R.
\end{equation*}
\end{itemize}
\end{lem}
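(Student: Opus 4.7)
The plan is to prove the two parts in turn, using the definition $h_{\al}(s)=\phi_{\al}(-\al^{-1}\ln s)$ together with the traveling wave ODE $\phi_{\al}''+c_{\al}\phi_{\al}'+g(\phi_{\al})=0$ and its reformulation \eqref{eqn-h}.

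For part (i), monotonicity and the limit at $\infty$ are immediate: $\phi_{\al}$ is a strictly decreasing KPP front and $s\mapsto-\al^{-1}\ln s$ is strictly decreasing on $(0,\infty)$, so $h_{\al}$ is strictly increasing with $\lim_{s\to\infty}h_{\al}(s)=\phi_{\al}(-\infty)=1$. The normalization $\lim_{y\to\infty}e^{\al y}\phi_{\al}(y)=1$ gives $h_{\al}(s)/s\to 1$ as $s\to 0^+$, and combined with $h_{\al}(0)=0$ yields $h_{\al}'(0)=1$. The delicate claim $h_{\al}''(s)<0$ for $s>0$ is, by \eqref{eqn-h}, equivalent to $\Psi(s):=g(h_{\al}(s))-sh_{\al}'(s)>0$ for $s>0$. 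I plan to verify $\Psi>0$ near both endpoints by asymptotic expansion ($\Psi(0^+)=\Psi(\infty)=0$ with positivity coming from the subleading term, controlled at $+\infty$ by $g_{uu}(0)\leq 0$ and at $-\infty$ by the linearization about $1$), and then rule out interior zeros: at any interior zero $s_0$, \eqref{eqn-h} forces $h_{\al}''(s_0)=0$, and differentiating gives
\begin{equation*}
\Psi'(s_0)=h_{\al}'(s_0)\bigl(g_u(h_{\al}(s_0))-1\bigr)\leq 0,
\end{equation*}
since $g_u$ is decreasing with $g_u(0)=1$. Thus a first interior zero would be a downward crossing, after which $\Psi<0$; since $\Psi(\infty)>0$ and upward crossings are excluded by the same sign constraint, no such zero exists, and $\Psi>0$ throughout $(0,\infty)$.

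For part (ii), positivity of $\rho_{\al}$ is immediate from (i). For the exponential bound, it suffices to show $|(\ln\rho_{\al})'(x)|\leq\beta$ and integrate. Introducing $H(x):=h_{\al}(e^{-x})$, the chain rule together with \eqref{eqn-h} yields the second-order ODE
\begin{equation*}
\al^2\ddot H+(\al^2+1)\dot H+g(H)=0,
\end{equation*}
from which one computes $\rho_{\al}(x)=\al^{-2}(\dot H+g(H))e^{2x}$ (and $\dot H+g(H)>0$ is precisely the positivity from (i)). Differentiating $\ln\rho_{\al}$ and substituting from the ODE produces the identity
\begin{equation*}
(\ln\rho_{\al})'(x)=2-\frac{1}{\al^2}+\bigl(1-g_u(H)\bigr)\,\frac{|\dot H|}{\dot H+g(H)}.
\end{equation*}
Since the last summand is nonnegative, the lower bound $(\ln\rho_{\al})'\geq -\beta$ is trivial; for the upper bound $\beta=2+1/\al^2$, it remains to prove $(1-g_u(H))|\dot H|/(\dot H+g(H))\leq 2/\al^2$. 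Passing to the phase-plane variable $P(H):=-\dot H$, which satisfies $PP_H=(1+1/\al^2)P-g(H)/\al^2$, the ODE rearranges as $g(H)-P=\al^2 P(1-P_H)$, so the required estimate reduces to the pointwise inequality $P_H(H)\leq(1+g_u(H))/2$ along the trajectory. This is the main obstacle: the inequality holds with equality at $H=0$ (where $P_H=1=(1+g_u(0))/2$) and with strict inequality at $H=1$ (a direct computation using the characteristic roots of the linearization and the hypothesis $g_u(1)\geq -1$); extending it to all $H\in(0,1)$ requires a monotonicity argument on an auxiliary function derived from the phase-plane ODE, using the concavity of $g$.
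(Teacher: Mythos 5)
The paper itself does not prove this lemma: it simply cites \cite{Zl12} for part (i) and \cite[Lemma 3.1]{LiZl14} for part (ii), so you are attempting a self-contained proof where the authors invoke prior work. Your reductions are correct as far as they go: for (ii), with $H(x)=h_{\al}(e^{-x})$ one indeed has $\al^{2}\ddot H+(\al^{2}+1)\dot H+g(H)=0$, $\rho_{\al}(x)=\al^{-2}e^{2x}(\dot H+g(H))$, the identity $(\ln\rho_{\al})'(x)=2-\al^{-2}+(1-g_{u}(H))\,|\dot H|/(\dot H+g(H))$ is right, the lower bound $-\beta$ is then trivial, and via $g(H)-P=\al^{2}P(1-P_{H})$ the upper bound is correctly equivalent to $P_{H}\leq\frac{1}{2}(1+g_{u}(H))$ along the trajectory $P(H)=-\dot H$.

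However, both parts stop short of a proof exactly at the decisive points. In (i), positivity of $\Psi(s)=g(h_{\al}(s))-sh_{\al}'(s)$ near $s=0$ and $s=\infty$ is asserted through expansions that (H2) does not license: $g$ is only $C^{1}$, so ``$g_{uu}(0)\leq0$'' is unavailable, and $g_{u}$ is only (non-strictly) decreasing, so for instance $g(u)=u$ near $u=0$ is allowed, in which case your leading-order sign argument degenerates; moreover your interior-zero computation only gives $\Psi'(s_{0})\leq0$, which does not rule out a zero where $\Psi$ touches and returns, so the crossing argument is not closed. In (ii), the inequality $P_{H}\leq\frac{1}{2}(1+g_{u}(H))$ on all of $(0,1)$ is the entire content of the exponential bound: it is sharp at $H=0$ (both sides equal $1$ there), so no crude estimate such as $1-g_{u}\leq2$ can yield it, and you explicitly leave its proof as a ``monotonicity argument on an auxiliary function'' that is never supplied. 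As written the proposal is therefore a correct reformulation, not a proof; to complete it you must actually carry out that phase-plane monotonicity argument (and a genuine endpoint analysis in (i)), or else do as the paper does and quote \cite{Zl12} and \cite[Lemma 3.1]{LiZl14} directly.
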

\begin{proof}
See \cite{Zl12} for $\rm(i)$ and \cite[Lemma 3.1]{LiZl14} for $\rm(ii)$.
\end{proof}

Set $\al=\frac{3}{4}$ (it is nothing special). We write $h(s):=h_{\frac{3}{4}}(s)$ and $\rho(x):=\rho_{\frac{3}{4}}(x)$. For $\ka>0$, define
\begin{equation*}
\psi^{\ka}(t,x)=h(\phi^{\ka}(t,x)),\quad (t,x)\in\R\times\R,
\end{equation*}
where $\phi^{\ka}(t,x)=e^{-\ka(x-c^{\ka}(t))}$ is given in \eqref{supsol}. Then,

\begin{prop}\label{prop-subsol}
There exists $\ka_{0}>0$ such that for any $\ka\in(0,\ka_{0}]$, $\psi^{\ka}(t,x)$ is a sub-solution of $u_{t}=J\ast u-u+a(t)g(u)$; in particular, it is a sub-solution of \eqref{main-eqn} .
\end{prop}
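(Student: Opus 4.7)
The plan is to verify the pointwise inequality $\psi_{t}\leq J\ast\psi-\psi+a(t)g(\psi)$ by combining the exponential scaling of $\phi^{\ka}$, a second-order Taylor expansion of $h$, and the ODE \eqref{eqn-h}. The key structural fact is that $\phi^{\ka}(t,x-y)=\phi^{\ka}(t,x)e^{\ka y}$, so after abbreviating $\phi:=\phi^{\ka}(t,x)$ the nonlocal term $J\ast\psi^{\ka}$ at $(t,x)$ reduces to $\int_{\R}J(y)h(\phi e^{\ka y})dy$, depending pointwise only on $\phi$ and the kernel $J$.

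First I would compute $\phi_{t}=[F(\ka)+a(t)]\phi$ with $F(\ka):=\int_{\R}J(y)e^{\ka y}dy-1$, which gives $\psi_{t}=h'(\phi)\phi[F(\ka)+a(t)]$. Taylor's theorem with integral remainder about $s=\phi$ yields
\[
h(\phi e^{\ka y})=h(\phi)+h'(\phi)\phi(e^{\ka y}-1)+R(\phi,y),\qquad R(\phi,y):=\int_{\phi}^{\phi e^{\ka y}}(\phi e^{\ka y}-s)h''(s)ds\leq 0,
\]
the sign coming from $h''<0$ (Lemma \ref{the-function-h}(i)). Integrating against $J$ (using $\int J=1$ and the definition of $F(\ka)$), subtracting $\psi=h(\phi)$, and using \eqref{eqn-h} to replace $h'(\phi)\phi-g(h(\phi))$ by $\al^{2}\phi^{2}h''(\phi)$, the sub-solution inequality is seen to be equivalent to
\[
a(t)\al^{2}\phi^{2}(-h''(\phi))\geq\int_{\R}J(y)(-R(\phi,y))dy,
\]
where both sides are non-negative.

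Next I would control the right-hand side by substituting $s=\phi e^{\eta}$ in $R(\phi,y)$ and invoking Lemma \ref{the-function-h}(ii) in the form $-h''(\phi e^{\eta})=\rho(-\ln\phi-\eta)\leq e^{\beta|\eta|}\rho(-\ln\phi)=e^{\beta|\eta|}(-h''(\phi))$. This factors out $\phi^{2}(-h''(\phi))$ and produces a bound $-R(\phi,y)\leq\phi^{2}(-h''(\phi))G(\ka y)$ for an explicit non-negative function $G$, independent of $\phi$ and $t$, with $G(0)=0$ and $G(z)=O(e^{(2+\beta)|z|})$ as $|z|\to\infty$. Dividing by $\phi^{2}(-h''(\phi))>0$, the inequality reduces to $\int_{\R}J(y)G(\ka y)dy\leq a(t)\al^{2}$. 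By (H1) the integral is finite for every $\ka\geq 0$ and continuous in $\ka$ with value $0$ at $\ka=0$, so one picks $\ka_{0}>0$ so that $\int_{\R}J(y)G(\ka y)dy\leq a_{\inf}\al^{2}$ for all $\ka\in(0,\ka_{0}]$. Since $a(t)\geq a_{\inf}$, this bound is uniform in $t\in\R$, which gives the proposition.

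The main obstacle is that $\phi$ ranges over all of $(0,\infty)$ as $(t,x)$ varies and $-h''$ admits neither a uniform upper bound nor a positive lower bound, so the Taylor remainder cannot be dominated by $-h''(\phi)$ by brute force. Lemma \ref{the-function-h}(ii) is precisely the estimate that rescues the argument: it trades the dependence of $-h''$ on $\phi e^{\eta}$ for a clean exponential factor $e^{\beta|\eta|}$ that decouples from $\phi$ and is integrable against $J$ thanks to (H1). This is what allows the method, following \cite{Zl12,LiZl14,TaZhZl14}, to handle the fully time-dependent setting, at the price of yielding only a sufficient (and likely non-optimal) range of admissible $\ka$.
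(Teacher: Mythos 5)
Your argument is correct and follows essentially the same route as the paper's proof: a second-order Taylor expansion of $h$ around $\phi^{\ka}(t,x)$ (integral remainder in your case, Lagrange form in the paper's), control of $-h''$ at the intermediate point via Lemma \ref{the-function-h}$\rm(ii)$ with $|\ln\zeta-\ln\phi^{\ka}|\leq\ka|y|$, the identity \eqref{eqn-h} to recover $a(t)g(\psi^{\ka})$, and the choice of $\ka_{0}$ from the smallness (indeed monotonicity) in $\ka$ of the kernel integral compared with $a_{\inf}\al^{2}$. The only differences are cosmetic rearrangements of the same inequality.
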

\begin{proof}
Since $\phi^{\ka}(t,x)$ satisfies the equation $\phi^{\ka}_{t}=J\ast\phi^{\ka}-\phi^{\ka}+a(t)\phi^{\ka}$, we readily check that $N[\psi^{\ka}]:=\psi^{\ka}_{t}-[J\ast\psi^{\ka}-\psi^{\ka}]$ satisfies
\begin{equation*}
N[\psi^{\ka}]=a(t)\phi^{\ka}h'(\phi^{\ka})+h'(\phi^{\ka})[J\ast\phi^{\ka}-\phi^{\ka}]-\int_{\R}J(y)[h(\phi^{\ka}(t,x-y))-h(\phi^{\ka}(t,x))]dy.
\end{equation*}
By the second-order Taylor expansion, i.e.,
\begin{equation*}
\begin{split}
h(\phi^{\ka}(t,x-y))-h(\phi^{\ka}(t,x))&=h'(\phi^{\ka}(t,x))[\phi^{\ka}(t,x-y)-\phi^{\ka}(t,x)]\\
&\quad+\frac{h''(\zeta_{x,y,t})}{2}[\phi^{\ka}(t,x-y)-\phi^{\ka}(t,x)]^{2}
\end{split}
\end{equation*}
for some $\zeta_{x,y,t}$ between $\phi^{\ka}(t,x-y)$ and $\phi^{\ka}(t,x)$, we see
\begin{equation*}
N[\psi^{\ka}]=a(t)\phi^{\ka}h'(\phi^{\ka})-\frac{1}{2}\int_{\R}J(y)h''(\zeta_{x,y,t})[\phi^{\ka}(t,x-y)-\phi^{\ka}(t,x)]^{2}dy.
\end{equation*}
For the integral in the above equality, we first see from the monotonicity of $\ln$ and the explicit expression of $\phi^{\ka}(t,x)$ that
\begin{equation*}
|\ln\zeta_{x,y,t}-\ln\phi^{\ka}(t,x)|\leq|\ln\phi^{\ka}(t,x-y)-\ln\phi^{\ka}(t,x)|\leq\ka|y|.
\end{equation*}
It then follows from Lemma \ref{the-function-h}$\rm(ii)$ with $\beta=2+\frac{1}{(3/4)^{2}}<4$ that
\begin{equation*}
-h''(\zeta_{x,y,t})=\rho(-\ln\zeta_{x,y,t})\leq e^{4|\ln\zeta_{x,y,t}-\ln\phi^{\ka}(t,x)|}\rho(-\ln\phi^{\ka}(t,x))\leq e^{4\ka|y|}[-h''(\phi^{\ka}(t,x))].
\end{equation*}
Hence,
\begin{equation*}
\begin{split}
&-\frac{1}{2}\int_{\R}J(y)h''(\zeta_{x,y,t})[\phi^{\ka}(t,x-y)-\phi^{\ka}(t,x)]^{2}dy\\
&\quad\quad\leq-\frac{1}{2}h''(\phi^{\ka}(t,x))\int_{\R}J(y)e^{4\ka|y|}[\phi^{\ka}(t,x-y)-\phi^{\ka}(t,x)]^{2}dy\\
&\quad\quad=-\frac{1}{2}h''(\phi^{\ka}(t,x))[\phi^{\ka}(t,x)]^{2}\int_{\R}J(y)e^{4\ka|y|}[e^{\ka y}-1]^{2}dy,
\end{split}
\end{equation*}
which leads to
\begin{equation}\label{an-equality-678543}
N[\psi^{\ka}]\leq a(t)\phi^{\ka}(t,x)h'(\phi^{\ka}(t,x))-\frac{1}{2}h''(\phi^{\ka}(t,x))[\phi^{\ka}(t,x)]^{2}\int_{\R}J(y)e^{4\ka|y|}[e^{\ka y}-1]^{2}dy.
\end{equation}
Since $\frac{d}{d\ka}[e^{\ka y}-1]^{2}=2(e^{\ka y}-1)e^{\ka y}y\geq0$, the function $\ka\mapsto\int_{\R}J(y)e^{4\ka|y|}[e^{\ka y}-1]^{2}dy$ is increasing on $(0,\infty)$. Moreover, we have
\begin{equation*}
\int_{\R}J(y)e^{4\ka|y|}[e^{\ka y}-1]^{2}dy\to\begin{cases}
0,\quad& \ka\to0,\\
\infty,\quad&\ka\to\infty.
\end{cases}
\end{equation*}
Thus, due to $\inf_{t\in\R}a(t)>0$ by (H2),  there is a unique $\ka_{0}>0$ such that
\begin{equation}\label{parameter-ka-bound}
\frac{1}{2}\int_{\R}J(y)e^{4\ka_{0}|y|}[e^{\ka_{0} y}-1]^{2}dy=\bigg(\frac{3}{4}\bigg)^{2}\inf_{t\in\R}a(t),
\end{equation}
which implies $\frac{1}{2}\int_{\R}J(y)e^{4\ka|y|}[e^{\ka y}-1]^{2}dy\leq\big(\frac{3}{4}\big)^{2}a(t)$ for $t\in\R$ and $\ka\in(0,\ka_{0}]$. It then follows from \eqref{an-equality-678543} that
\begin{equation*}
N[\psi^{\ka}]\leq a(t)\bigg[\phi^{\ka}(t,x)h'(\phi^{\ka}(t,x))-\bigg(\frac{3}{4}\bigg)^{2}h''(\phi^{\ka}(t,x))[\phi^{\ka}(t,x)]^{2}\bigg]=a(t)g(\psi^{\ka}(t,x)),
\end{equation*}
where we used \eqref{eqn-h} with $s=\phi^{\ka}(t,x)$ and $\al=\frac{3}{4}$. The proposition then follows from $a(t)g(u)\leq f(t,u)$ for $u\in[0,1]$ by (H2).
\end{proof}


\subsection{Some remarks}\label{subsec-some-remarks}

We make some remarks about the sub- and super-solutions constructed in the above two subsections.

\smallskip

\noindent (i) From Lemma \ref{the-function-h}$\rm(i)$, there holds $h_{\al}(s)\leq s$ for all $s\geq0$. This, in particular, implies that $\psi^{\ka}(t,x)=h(\phi^{\ka}(t,x))\leq\phi^{\ka}(t,x)$ for all $(t,x)\in\R\times\R$ (actually, the strict inequality holds). This order relation between $\psi^{\ka}(t,x)$ and $\phi^{\ka}(t,x)$ is important in establishing various properties of approximating solutions, which will be studied in the next section, Section \ref{sec-construct-tf}. Moreover, $\psi^{\ka}(t,x)$ and $\phi^{\ka}(t,x)$ propagate to the right with the same speed $\dot{c}^{\ka}(t)$. This fact says that any global-in-time solution of \eqref{main-eqn} between $\psi^{\ka}(t,x)$ and $\min\{1,\phi^{\ka}(t,x)\}$ is a transition front.

\smallskip

\noindent (ii) In constructing the sub-solution $\psi^{\ka}(t,x)$, we restrict $\ka$ to take values in $(0,\ka_{0}]$, where $\ka_{0}>0$ is given by \eqref{parameter-ka-bound}. If we let $\ka_{0}(t)>0$ be the one minimizing $\dot{c}^{\ka}(t)$, that is,
\begin{equation}\label{minimizer-148}
\dot{c}^{\ka_{0}(t)}(t)=\min_{\ka>0}\dot{c}^{\ka}(t).
\end{equation}
then we have
\begin{equation}\label{an-relation-parameter-ka}
\ka_{0}<\inf_{t\in\R}\ka_{0}(t).
\end{equation}
In fact, notice the critical points for $\dot{c}^{\ka}(t)$, i.e., points satisfying $\frac{d}{d\ka}\dot{c}^{\ka}(t)=0$, satisfies
\begin{equation*}
\ka\int_{\R}J(y)ye^{\ka y}dy-\int_{\R}J(y)e^{\ka y}dy+1=a(t).
\end{equation*}
Setting
\begin{equation*}
q_{1}(\ka)=\ka\int_{\R}J(y)ye^{\ka y}dy-\int_{\R}J(y)e^{\ka y}dy+1,
\end{equation*}
we see that $q_{1}(0)=0$ and $q_{1}'(\ka)=\ka\int_{\R}J(y)y^{2}e^{\ka y}dy>0$ for $\ka>0$. Setting
\begin{equation*}
q_{2}(\ka)=\frac{1}{2}\bigg(\frac{4}{3}\bigg)^{2}\int_{\R}J(y)e^{4\ka|y|}[e^{\ka y}-1]^{2}dy,
\end{equation*}
we see that $q_{2}(0)=0$ and
\begin{equation*}
\begin{split}
q_{2}'(\ka)&=\bigg(\frac{4}{3}\bigg)^{2}\int_{\R}J(y)2|y|e^{4\ka|y|}[e^{\ka y}-1]^{2}dy+\bigg(\frac{4}{3}\bigg)^{2}\int_{\R}J(y)e^{4\ka|y|}[e^{\ka y}-1]e^{\ka y}ydy\\
&\geq\int_{\R}J(y)e^{4\ka|y|}[e^{\ka y}-1]e^{\ka y}ydy=\int_{\R}J(y)e^{4\ka|y|}e^{\ka y_{*}}\ka ye^{\ka y}ydy\quad(\text{where}\,\,y_{*}\in[0,y])\\
&=\ka\int_{\R}J(y)e^{2\ka|y|}y^{2}e^{\ka(|y|+y_{*})}e^{\ka(|y|+y)}dy\geq\ka\int_{\R}J(y)e^{2\ka|y|}y^{2}dy>q_{1}'(\ka),\quad\ka>0.
\end{split}
\end{equation*}
This simply means that $q_{2}(\ka)>q_{1}(\ka)$ for $\ka>0$. Hence, $q_{1}(\ka_{0})<q_{2}(\ka_{0})=\inf_{t\in\R}a(t)$. Since $q_{1}(\ka)$ is continuous and increasing, we conclude \eqref{an-relation-parameter-ka}. We will use \eqref{an-relation-parameter-ka} in the proof of Lemma \ref{lem-asymptotic-at-infty}, which is the key to the stability of transition fronts.

\medskip

\noindent (iii) A possible way to enlarge the value of $\ka_{0}$ is to make $\al$ change (we have fixed $\al=\frac{3}{4}$ in the above analysis). But, it seems not enough to push $\ka_{0}$ arbitrary close to $\inf_{t\in\R}\ka_{0}(t)$. Also, $\inf_{t\in\R}\ka_{0}(t)$ is hardly the optimal value for $\ka_{0}$, since in the periodic case, $\ka_0$ is exactly such that
$$
\frac{\int_{\R}J(y)e^{\ka_0 y}dy -1+\hat a}{\ka_0}=\min_{\ka >0} \frac{\int_{\R}J(y)e^{\ka y}dy -1+\hat a}{\ka}
$$
where $\hat a=\frac{1}{T}\int_0^T a(t)dt$ and $T$ is the period (see e.g. \cite{RaShZh}).


\section{Construction of transition fronts}\label{sec-construct-tf}

In this section, we construct transition fronts and study their space regularity and decaying properties, that is, we are going to prove Theorem \ref{thm-tf}. Throughout this section, we assume (H1) and (H2).

Fix any $\ka\in(0,\ka_{0}]$, where $\ka_{0}>0$ is given in Proposition \ref{prop-subsol}. For this fixed $\ka$, we write $c(t)=c^{\ka}(t)$, $\phi(t,x)=\phi^{\ka}(t,x)$ and $\psi(t,x)=\psi^{\ka}(t,x)$. Again, $h(s)=h_{\frac{3}{4}}(s)$.

The proof of the existence of transition fronts is constructive. We first construct approximating solutions. For $n\geq1$, let $u^{n}(t,x)$, $t\geq-n$ be the unique solution of
\begin{equation}\label{eqn-approximating-sol}
\begin{cases}
u_{t}=J\ast u-u+f(t,u),\quad t>-n,\\
u(-n,x)=\psi(-n,x)=h(\phi(-n,x)).
\end{cases}
\end{equation}
We prove some basic properties of $u^{n}(t,x)$ in the following

\begin{lem}\label{lem-harnack-type}
The following statements hold:
\begin{itemize}
\item[\rm(i)] for any $(t,x)\in[-n,\infty)\times\R$ and $n\geq1$, there holds
\begin{equation*}
0<\psi(t,x)\leq u^{n}(t,x)\leq\tilde{\phi}(t,x):=\min\{1,\phi(t,x)\};
\end{equation*}

\item[\rm(ii)] for any $(t,x,y)\in[-n,\infty)\times\R\times\R$ and $n\geq1$, there holds
\begin{equation*}
u^{n}(t,x)\leq\frac{e^{\ka|x-y|}}{h(1)}u^{n}(t,y);
\end{equation*}

\item[\rm(iii)] if $n>m\geq1$, then $u^n(t,x)\ge u^m(t,x)$ for all $(t,x)\in[-m,\infty)\times\R$.
\end{itemize}
\end{lem}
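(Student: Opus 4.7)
The plan is to prove (i) by bracketing the approximating solution between the sub- and super-solutions of Section \ref{sec-sub-super-sol}, (ii) by combining that envelope with the explicit exponential form of $\phi$ together with the concavity of $h$, and (iii) by applying (i) at time $-m$ and then invoking comparison on $[-m,\infty)$.

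For (i), I would first verify at $t=-n$ that $0<\psi(-n,\cdot)=h(\phi(-n,\cdot))\leq\min\{1,\phi(-n,\cdot)\}=\tilde\phi(-n,\cdot)$; this is immediate because $h$ is positive, increasing, and bounded above by both $s$ and $1$ (Lemma \ref{the-function-h}(i)). Since $\psi$ is a sub-solution of \eqref{main-eqn} by Proposition \ref{prop-subsol}, the comparison principle for the nonlocal parabolic equation on $[-n,\infty)\times\R$ yields $u^n\geq\psi>0$. For the upper bound, the constant $1$ is a super-solution because $f(t,1)=0$, and $\phi$ is a super-solution because it solves the linearization $\phi_t=J\ast\phi-\phi+a(t)\phi$ while $f(t,u)\leq a(t)u$ on $[0,1]$. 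Two separate applications of comparison then give $u^n\leq 1$ and $u^n\leq\phi$, hence $u^n\leq\tilde\phi$.

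For (ii), I would split according to the size of $\phi(t,y)$. If $\phi(t,y)\geq 1$, monotonicity of $h$ and part (i) give $u^n(t,y)\geq h(\phi(t,y))\geq h(1)$ while $u^n(t,x)\leq 1$, so $u^n(t,x)\leq u^n(t,y)/h(1)\leq e^{\ka|x-y|}u^n(t,y)/h(1)$. If $\phi(t,y)<1$, then concavity of $h$ on $[0,1]$ with $h(0)=0$ implies $s\mapsto h(s)/s$ is nonincreasing on $(0,1]$, so $\phi(t,y)/h(\phi(t,y))\leq 1/h(1)$; combined with $\phi(t,x)=\phi(t,y)e^{-\ka(x-y)}\leq\phi(t,y)e^{\ka|x-y|}$ and part (i), this yields
\[
u^n(t,x)\leq\phi(t,x)\leq\phi(t,y)e^{\ka|x-y|}\leq\frac{h(\phi(t,y))}{h(1)}e^{\ka|x-y|}\leq\frac{e^{\ka|x-y|}}{h(1)}u^n(t,y).
\]

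For (iii), part (i) applied at $t=-m$ gives $u^n(-m,x)\geq\psi(-m,x)=u^m(-m,x)$; both $u^n$ and $u^m$ solve \eqref{main-eqn} on $[-m,\infty)\times\R$ with Lipschitz reaction, so the comparison principle propagates the ordering to all $t\geq -m$. The only mildly delicate point is the case split in (ii): since $s/h(s)\to\infty$ as $s\to\infty$, the sublinear estimate $s\leq h(s)/h(1)$ is unavailable globally, and one must instead dispatch the region $\phi(t,y)\geq 1$ using the lower bound $h(1)$ and the region $\phi(t,y)<1$ using concavity on $[0,1]$. Everything else is a routine comparison argument for the nonlocal equation.
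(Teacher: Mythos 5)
Your proposal is correct and follows essentially the same route as the paper: comparison with the sub-solution $\psi$ and the super-solutions $1$ and $\phi$ for (i) and (iii), and the concavity bound $h(1)s\leq h(s)$ on $[0,1]$ together with the exponential form of $\phi$ for (ii). The only cosmetic difference is that your case split on $\phi(t,y)\gtrless 1$ in (ii) is handled in the paper in one stroke by working with $\tilde{\phi}=\min\{1,\phi\}$, via $\tilde{\phi}(t,x)\leq e^{\ka|x-y|}\tilde{\phi}(t,y)$ and $h(1)\tilde{\phi}\leq\psi\leq u^{n}$.
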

\begin{proof}
$\rm(i)$ By Lemma \ref{the-function-h}$\rm(i)$, we have $h(s)\leq s$ for all $s\geq0$, which implies that $u(-n,x)\leq\phi(-n,x)$. The result then follows from comparison principle.

$\rm(ii)$ Using the expression $\phi(t,x)=e^{-\ka(x-c(t))}$, we readily check $\tilde{\phi}(t,x)\leq e^{\ka|x-y|}\tilde{\phi}(t,y)$. Moreover, since $h(0)=0$ and $h''(s)<0$ for all $s>0$, we find $h(1)s\leq h(s)$ for $s\in[0,1]$. In particular, by the monotonicity of $h$, we have
\begin{equation*}
h(1)\tilde{\phi}(t,x)\leq h(\tilde{\phi}(t,x))\leq h(\phi(t,x))=\psi(t,x)\leq u^{n}(t,x),
\end{equation*}
which implies that
\begin{equation*}
u^{n}(t,x)\leq\tilde{\phi}(t,x)\leq e^{\ka|x-y|}\tilde{\phi}(t,y)\leq\frac{e^{\ka|x-y|}}{h(1)}u^{n}(t,y).
\end{equation*}

$\rm(iii)$ Let $n>m\geq1$. Since $\psi(t,x)$ is a sub-solution of \eqref{main-eqn} and $\psi(-n,\cdot)=u^{n}(-n,\cdot)$, comparison principle yields $\psi(t,\cdot)\leq u^{n}(t,\cdot)$ for all $t\geq-n$. In particular, at time $t=-m$, we have $u^{n}(-m,\cdot)\geq\psi(-m,\cdot)=u^{m}(-m,\cdot)$. Again, by comparison principle, we arrive at the result.
\end{proof}

The sequence $\{u^{n}(t,x)\}$ is the approximating solutions. However, we can not conclude immediately the convergence of $\{u^{n}(t,x)\}$ to a global-in-time solution of \eqref{main-eqn} due to the lack of regularity. Following the arguments in \cite{LiZl14}, we can show the uniform Lipschitz continuity of $\{u^{n}(t,x)\}$ in $x$, which of course ensures the convergence. Here, we take a different approach, which is based on the monotonicity of $\{u^{n}(t,x)\}$ as in Lemma \ref{lem-harnack-type}$\rm(iii)$.

Now, we prove Theorem \ref{thm-tf}.

\begin{proof}[Proof of Theorem \ref{thm-tf}]
We first construct transition fronts. By Lemma \ref{lem-harnack-type}$\rm(iii)$, for any fixed $(t,x)\in\R\times\R$, there exists $n_{0}=n_{0}(t)\geq1$ such that the sequence $\{u^{n}(t,x)\}_{n\geq n_{0}}$ is nondecreasing. Since it is clearly between $0$ and $1$, the limit $\lim_{n\to\infty}u^{n}(t,x)$ exists and equals to some number in $[0,1]$. Hence, there exists a function $u:\R\times\R\to[0,1]$ such that
\begin{equation}\label{locally-uniform-convergence-to-tf}
\lim_{n\to\infty}u^{n}(t,x)=u(t,x)\quad\text{pointwise in}\quad(t,x)\in\R\times\R.
\end{equation}
Moreover, since $u^{n}(t,x)$ satisfies $u^{n}_{t}=J\ast u^{n}-u^{n}+f(t,u^{n})$,
we have that for any $t>-n$ and $x\in\R$,
\begin{equation}\label{existence-eq1}
u^n(t,x)=u^n(0,x)+\int_0^t \int_{\R}J(y-x)u^n(s,y)dyds-\int_0^t u^n(s,x)ds+\int_0^t f(s,u^n(s,x))ds.
\end{equation}
Passing to the limit $n\to\infty$ in \eqref{existence-eq1}, we conclude from the dominated convergence theorem that for any $t\in\R$ and $x\in\R$,
\begin{equation*}\label{existence-eq2}
u(t,x)=u(0,x)+\int_0^t \int_{\R}J(y-x)u(s,y)dyds-\int_0^t u(s,x)ds+\int_0^t f(s,u(s,x))ds.
\end{equation*}
From which, we conclude that $u(t,x)$ is differentiable in $t$ and  satisfies \eqref{main-eqn}.

To see $u(t,x)$ is a transition front of \eqref{main-eqn}, we notice $\psi(t,x)\leq u(t,x)\leq\phi(t,x)$ by Lemma \ref{lem-harnack-type}$\rm(i)$ and \eqref{locally-uniform-convergence-to-tf}. Taking $X(t)=c(t)$, we find
\begin{equation}\label{expo-decaying-tf}
h(e^{-\ka x})=\psi(t,x+X(t))\leq u(t,x+X(t))\leq\phi(t,x+X(t))=e^{-\ka x},
\end{equation}
which implies that $u(t,x)$ is a transition front of \eqref{main-eqn}.

We now prove the properties $\rm(i)$-$\rm(v)$ listed in the statement of Theorem \ref{thm-tf}.

$\rm(i)$ Since $\{u^{n}(t,x)\}_{n\geq1}$ are decreasing in $x$, $u(t,x)$ is nonincreasing in $x$. Since $u(t,x)$ is global-in-time, it is decreasing in $x$ due to comparison principle (see e.g. \cite[Proposition A.1(iii)]{ShSh14}).

$\rm(ii)$ By \eqref{expo-decaying-tf}, we have $\frac{h(e^{-\ka x})}{e^{-\ka x}}\leq\frac{u(t,x+X(t))}{e^{-\ka x}}\leq1$. Since $\lim_{x\to\infty}\frac{h(e^{-\ka x})}{e^{-\ka x}}=h'(0)=1$ due to Lemma \ref{the-function-h}$\rm(i)$, we find that the limit $\lim_{x\to\infty}\frac{u(t,x+X(t))}{e^{-\ka x}}=1$ exists and is uniform in $t\in\R$. We also have
\begin{equation}\label{asymptotic-9564}
\lim_{x\to\infty}\frac{u(t,x+X(t))}{h(e^{-\ka x})}=1\quad\text{uniformly in}\quad t\in\R.
\end{equation}

$\rm(iii)$ By \eqref{locally-uniform-convergence-to-tf} and Lemma \ref{lem-harnack-type}$\rm(ii)$, we have
\begin{equation}\label{harnack-type-tf}
u(t,x)\leq\frac{e^{\ka|x-y|}}{h(1)}u(t,y), \quad(t,x,y)\in\R\times\R\times\R.
\end{equation}
This verifies \eqref{harnack-type-intro}. The result then follows from Proposition \ref{prop-regularity}.

$\rm(iv)$ By $\rm(iii)$, $v(t,x):=u_{x}(t,x)$ exists and satisfies the equation
\begin{equation*}
v_{t}=a(t,x)v+b(t,x),
\end{equation*}
where
\begin{equation*}
a(t,x)=-1+f_{u}(t,u(t,x))\quad\text{and}\quad b(t,x)=\int_{\R}J'(y)u(t,x-y)dy.
\end{equation*}
We see that for any $t\in\R$ and $T>0$, there holds
\begin{equation}\label{aux-eqn-eqn}
v(t,x)=e^{\int_{t-T}^{t}a(s,x)ds}v(t-T,x)+\int_{t-T}^{t}e^{\int_{\tau}^{t}a(s,x)ds}b(\tau,x)d\tau.
\end{equation}
We are going to show $v(t,x+X(t))\to0$ as $x\to-\infty$ uniformly in $t\in\R$. To do so, we let $\ep>0$, and choose $T=T(\ep)>0$ and $L=L(\ep)>0$ such that
\begin{equation*}
Ce^{-T}\leq\frac{\ep}{3}\quad\text{and}\quad\int_{\R\backslash[-L,L]}|J'(x)|dx\leq\frac{\ep}{3},
\end{equation*}
where $C:=\sup_{(t,x)\in\R\times\R}|u_{x}(t,x)|<\infty$ by $\rm(iv)$. Notice such a $L$ exists due to (H1). For $\theta_{1}$ as in (H2), let $X_{\theta_{1}}(t)$ be the interface location function at $\theta_{1}$, i.e., $u(t,X_{\theta_{1}}(t))=\theta_{1}$ for all $t\in\R$. It is well-defined due to the monotonicity of $u(t,x)$ in $x$. Since $u(t,x)$ is a transition front, we have $\sup_{t\in\R}|X_{\theta_{1}}(t)-X(t)|<\infty$. Setting
\begin{equation*}
C_{T}=\Big[\sup_{t\in\R}\dot{X}(t)\Big]T+\sup_{t\in\R}|X_{\theta_{1}}(t)-X(t)|,
\end{equation*}
we readily check that if $x-X(t)\leq-C_{T}$, then $u(s,x)\geq\theta_{1}$ for all $s\in[t-T,t]$. As a result,
\begin{equation*}
a(s,x)=-1+f_{u}(s,u(s,x))\leq-1,\quad s\in[t-T,t],\quad x-X(t)\leq-C_{T}.
\end{equation*}
It then follows from \eqref{aux-eqn-eqn} that for $x-X(t)\leq-C_{T}$ there holds
\begin{equation}\label{aux-eqn-eqn-1}
|v(t,x)|\leq Ce^{-T}+\int_{t-T}^{t}e^{-(t-\tau)}|b(\tau,x)|d\tau\leq\frac{\ep}{3}+\int_{t-T}^{t}e^{-(t-\tau)}|b(\tau,x)|d\tau.
\end{equation}
For $b(\tau,x)$, we have
\begin{equation}\label{aux-eqn-eqn-2}
\begin{split}
|b(\tau,x)|&\leq\int_{\R\backslash[-L,L]}|J'(x)|dx+\bigg|\int_{-L}^{L}J'(y)u(\tau,x-y)dy\bigg|\\
&\leq\frac{\ep}{3}+\bigg|\int_{-L}^{L}J'(y)u(\tau,x-y)dy\bigg|.
\end{split}
\end{equation}
Since $J'$ is odd and $u(\tau,x+X(\tau))\to1$ as $x\to-\infty$ uniformly in $\tau\in\R$, we find some $C_{T,L}\geq C_{T}$ such that if $x-X(t)\leq-C_{T,L}$, then
\begin{equation*}
\bigg|\int_{-L}^{L}J'(y)u(\tau,x-y)dy\bigg|\leq\frac{\ep}{3},\quad\tau\in[t-T,t].
\end{equation*}
We then deduce from \eqref{aux-eqn-eqn-1} and \eqref{aux-eqn-eqn-2} that for $x-X(t)\leq-C_{T,L}$ there holds
\begin{equation*}
|v(t,x)|\leq\frac{\ep}{3}+\frac{2\ep}{3}\int_{t-T}^{t}e^{-(t-\tau)}d\tau\leq\ep.
\end{equation*}
Observing the above analysis is uniform in $t\in\R$, we find the limit.

$\rm(v)$ By $\rm(ii)$, we have $\lim_{x\to\infty}\frac{u(t,x+X(t))}{e^{-\ka x}}=1$ uniformly in $t\in\R$. It follows that for any $\ep\in(0,1]$, there exists $M(\ep)>0$ such that
\begin{equation*}
u(t,x+X(t))\geq(1-\ep^{2})e^{-\ka x},\quad x\geq M(\ep),\quad t\in\R.
\end{equation*}
By Taylor expansion and \eqref{expo-decaying-tf}, we find for $x\geq M(\ep)$ and $t\in\R$, there exists $\ep_{*}=\ep_{*}(t,x)\in(0,\ep)$ such that
\begin{equation*}
\begin{split}
u_{x}(t,x+\ep_{*}+X(t))&=\frac{u(t,x+\ep+X(t))-u(t,x+X(t))}{\ep}\\
&\leq\frac{e^{-\ka(x+\ep)}-e^{-\ka x}+\ep^{2}e^{-\ka x}}{\ep}=e^{-\ka x}\bigg(\frac{e^{-\ka\ep}-1}{\ep}+\ep\bigg),
\end{split}
\end{equation*}
which leads to $\frac{u_{x}(t,x+\ep_{*}+X(t))}{e^{-\ka(x+\ep_{*})}}\leq e^{\ka\ep}\big(\frac{e^{-\ka\ep}-1}{\ep}+\ep\big)$. In particular, for all $x\geq M(\ep)+1$ and $t\in\R$,
\begin{equation}\label{estimate-one-side-1}
\frac{u_{x}(t,x+X(t))}{e^{-\ka x}}\leq e^{\ka\ep}\bigg(\frac{e^{-\ka\ep}-1}{\ep}+\ep\bigg).
\end{equation}

Similarly, using $\lim_{x\to\infty}\frac{u(t,x+X(t))}{h(e^{-\ka x})}=1$ uniformly in $t\in\R$ by \eqref{asymptotic-9564}, we can show that for any $\ep\in(0,1]$, there exists $\tilde{M}(\ep)>0$ such that for $x\geq\tilde{M}(\ep)+1$ and $t\in\R$ there holds
\begin{equation}\label{estimate-one-side-2}
\frac{u_{x}(t,x+X(t))}{e^{-\ka x}}\geq(1+\ep)\frac{e^{-\ka \ep}-1}{\ep}-\ep(1+\ep).
\end{equation}

Since $\lim_{\ep\to0}\frac{e^{-\ka \ep}-1}{\ep}=-\ka$, \eqref{estimate-one-side-1} and \eqref{estimate-one-side-2} imply that the limit $\lim_{x\to\infty}\frac{u_{x}(t,x+X(t))}{e^{-\ka x}}=-\ka$ holds and is uniform in $t\in\R$. The result then follows from $\rm(ii)$.
\end{proof}


\section{Stability of transition fronts}\label{sec-stability}

Let $\ka_{0}$ be as in Proposition \ref{prop-subsol} and fix $\ka\in(0,\ka_{0}]$. Let $u(t,x)=u^{\ka}(t,x)$ be the transition front constructed in Theorem \ref{thm-tf} with interface location function $X(t)=c^{\ka}(t)$. We study the asymptotic stability of $u(t,x)$. Throughout this section, we assume (H1)-(H3).

\subsection{Sub- and super-solutions}

In this subsection, we construct appropriate sub- and super-solutions. We prove

\vspace{-0.04in}\begin{prop}\label{prop-sub-sup-solution}
Let $\ep_{*}\in(0,1-\theta_{1})$ and $\om_{*}=\beta_{1}$, where $\theta_{1}\in(0,1)$ and $\beta_{1}>0$ are as in (H3). Then, for any $\ep\in(0,\ep_{*}]$ and $\om\in(0,\om_{*}]$, there exists $C=C(\ep_{*},\om)>0$ such that for any $\xi\in\R$
\begin{equation*}
\begin{split}
u^{-}(t,x)&=(1-\ep e^{-\om(t-\tau)})u(t,x+\xi-C\ep e^{-\om(t-\tau)}),\quad t\geq\tau\quad\text{and}\\
u^{+}(t,x)&=(1+\ep e^{-\om(t-\tau)})u(t,x+\xi+C\ep e^{-\om(t-\tau)}),\quad t\geq\tau
\end{split}
\end{equation*}
are sub-solution and sup-solution of \eqref{main-eqn}, respectively.
\end{prop}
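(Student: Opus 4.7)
The plan is to verify both differential inequalities directly by substituting $u^\pm$ into the operator $N[w] := w_t - (J \ast w - w) - f(t, w)$; I present the sub-solution case $N[u^-] \le 0$, as the super-solution case is symmetric. Writing $\de(t) := \ep e^{-\om(t - \tau)}$, $y := x + \xi - C\de(t)$, and $v := u(t, y)$, translation invariance of $J \ast$ gives $(J \ast u^-)(t, x) - u^-(t, x) = (1-\de)[(J \ast u)(t, y) - v]$, and using that $u$ solves \eqref{main-eqn} reduces $N[u^-] \le 0$ to the scalar bound
\begin{equation*}
\om v + C\om(1-\de)\, u_x(t, y) \;\le\; A(t, v, \de), \qquad A(t, v, \de) := \frac{f(t, (1-\de)v) - (1-\de) f(t, v)}{\de}.
\end{equation*}
The mean value theorem applied to $s \mapsto f(t, s)$ on $[(1-\de) v, v]$ yields the convenient representation $A(t, v, \de) = f(t, v) - v f_u(t, \eta)$ for some $\eta \in ((1-\de)v, v)$.

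I then split the analysis into three regions using suitably small cutoffs $0 < \eta_0' < 1 - \eta_0 < 1$, where $\eta_0$ is chosen so that $(1 - \ep_*)(1 - \eta_0) \ge \theta_1$. When $v \ge 1 - \eta_0$, we have $\eta \ge \theta_1$, so (H3) gives $-v f_u(t, \eta) \ge \beta_1 v$, hence $A \ge \beta_1 v \ge \om v$; since $u_x \le 0$ by Theorem \ref{thm-tf}(i), the inequality holds for every $C \ge 0$. When $v \le \eta_0'$, Taylor expansion based on $f_u(t, 0) = a(t)$ and the uniform $f_{uu}$-bound from (H2) yields $|A(t, v, \de)| \le K_0 v^2$, while Theorem \ref{thm-tf}(v) provides $|u_x(t, y)| \ge (\ka/2) v$ for $\eta_0'$ small, uniformly in $t$. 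Choosing $C$ large enough (depending on $\ep_*$ and $\om$) then makes the negative gradient term dominate the quadratic residual and closes the estimate.

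The main obstacle is the middle region $v \in [\eta_0', 1 - \eta_0]$. By the bounded-interface-width property of $u$, the corresponding $y$-values lie in an interval $I(t)$ of length bounded uniformly in $t$, and both $A$ and $\om v$ are uniformly bounded by some constant $M$; the inequality therefore reduces to a uniform positive lower bound $|u_x(t, y)| \ge c_0 > 0$ for $y \in I(t)$, $t \in \R$. I would prove this bound by a contradiction-and-compactness argument: if a sequence $(t_n, y_n)$ with $v(t_n, y_n) \in [\eta_0', 1-\eta_0]$ satisfied $u_x(t_n, y_n) \to 0$, then the uniform $C^1$ control in Theorem \ref{thm-tf}(iii), the Harnack-type estimate \eqref{harnack-type-intro}, bounded interface width, and (H2) together permit extraction (along a subsequence on which $f(\cdot + t_n, \cdot)$ also converges locally uniformly by Arzela--Ascoli) of a locally uniform limit $u_\infty$ of the shifted fronts $u(\cdot + t_n, \cdot + X(t_n))$. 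The limit $u_\infty$ is a globally bounded, non-constant, monotone-in-$x$ solution of a limiting equation, and differentiating in $x$ shows that $w := -u_{\infty, x} \ge 0$ solves the linear nonlocal equation $w_t = J \ast w - w + f_u^\infty(t, u_\infty) w$. A strong-maximum-principle argument then rules out an interior zero of $w$: a zero at $(t_0, y_0)$ would force $(J \ast w)(t_0, y_0) = 0$, hence $w \equiv 0$ on the translate $y_0 - \supp J$; iterating propagates $w \equiv 0$ globally, contradicting strict monotonicity of $u_\infty$.

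Taking $C = C(\ep_*, \om)$ as the maximum of the constants required across the three regions yields $N[u^-] \le 0$. The super-solution bound $N[u^+] \ge 0$ follows from the same three-region analysis with $\de$ replaced by $-\de$ and the shift reversed, after extending $f(t, \cdot)$ to $u > 1$ via $f(t, u) \le a(t) u$ (permitted by (H2), as used earlier in the construction of $\phi^\ka$) so that $f(t, u^+)$ remains well-defined when $(1+\de)v > 1$.
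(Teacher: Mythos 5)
Your reduction and the two outer regions are correct and essentially identical to the paper's own proof: after substituting $u^{-}$ and using that $u$ solves \eqref{main-eqn}, the paper likewise splits into the region behind the front, where $u^{-}\geq\theta_{1}$ and (H3) together with $\om\leq\beta_{1}$ close the estimate, the region ahead of the front, where Theorem \ref{thm-tf}$\rm(v)$ gives $u_{x}/u\leq-\ka/2$ and a large $C$ absorbs the zero-order terms (the paper simply uses the bound on $|f_{u}|$ where you use a finer quadratic bound via $f_{uu}$; both work, and your passage from ``$v$ small'' to ``$x-X(t)$ large'' is justified by \eqref{expo-decaying-tf}), and a middle region where everything hinges on the uniform steepness bound $\sup_{t\in\R}\sup_{|x-X(t)|\leq M}u_{x}(t,x)<0$. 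The paper isolates exactly this as Lemma \ref{lem-uniform-steepness} and defers its proof to \cite{ShSh14-3}; your labeling of the regions by the value of $v$ rather than by $x-X(t)$ is equivalent via monotonicity and bounded interface width.

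The genuine gap is in your proposed compactness proof of that steepness bound. To extract a limit $u_{\infty}$ of the shifts $u(\cdot+t_{n},\cdot+y_{n})$ for which $w=-u_{\infty,x}$ solves a linear nonlocal equation and satisfies $w(0,0)=0$, you need the derivatives $u_{x}(\cdot+t_{n},\cdot+y_{n})$ to converge, hence an equicontinuity estimate for $u_{x}$ in $x$ that is uniform in $t$. Theorem \ref{thm-tf}$\rm(iii)$ gives only boundedness of $u_{x}/u$ (hence of $u_{x}$), not a modulus of continuity, and this is precisely the regularity the nonlocal semigroup does not generate; the representation \eqref{aux-eqn-eqn} does not supply it near the interface because there $a(t,x)=-1+f_{u}(t,u)$ need not be negative, so the dependence on the initial derivative (whose modulus is unknown) does not decay. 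A secondary problem: (H2) bounds $f_{t},f_{u},f_{uu}$ but not the mixed derivative, so $f_{u}(\cdot+t_{n},\cdot)$ need not converge locally uniformly along any subsequence, and the limiting coefficient can only be taken as a bounded measurable function. The lemma itself is true and can be proved without compactness by arguing quantitatively from \eqref{aux-eqn-eqn}: since $u_{x}\leq0$, dropping the first term gives, with $K:=\sup_{(t,u)}|f_{u}(t,u)|$,
\begin{equation*}
-u_{x}(t,x)\;\geq\;e^{-(1+K)T}\int_{t-T}^{t}\int_{\R}J(y)\,[-u_{x}(\tau,x-y)]\,dy\,d\tau,
\end{equation*}
and the inner integral is bounded below, on an interval where $J$ is bounded away from zero, by a difference $u(\tau,x-b)-u(\tau,x-a)$; iterating the inequality enlarges $b-a$ beyond the interface width plus the bounded drift of $X$ on $[t-T,t]$, which yields the uniform positive lower bound. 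As written, however, the compactness step in your middle-region argument is a real gap rather than a cosmetic one.
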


To prove Proposition \ref{prop-sub-sup-solution}, we need the uniform steepness of $u(t,x)$ given in the following

\begin{lem}\label{lem-uniform-steepness}
For any $M>0$, there holds $\sup_{t\in\R}\sup_{|x-X(t)|\leq M}u_{x}(t,x)<0$.
\end{lem}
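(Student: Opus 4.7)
My plan is to argue by contradiction using a translation/compactness argument together with a strong maximum principle for the linear equation satisfied by $u_x$. Suppose the conclusion fails; then there exist $\{t_n\}\subset\R$ and points $x_n$ with $y_n:=x_n-X(t_n)\in[-M,M]$ and $u_x(t_n,x_n)\to 0$. Passing to a subsequence, $y_n\to y_\infty\in[-M,M]$.

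First I would consider the space-time translates $u_n(t,x):=u(t+t_n,x+X(t_n))$, which are transition fronts for the equation with nonlinearity $f_n(t,u):=f(t+t_n,u)$ and interface location $X_n(t):=X(t+t_n)-X(t_n)$. By (H2) the family $\{f_n\}$ is uniformly bounded in $C^1$ in $t$ and $C^2$ in $u$, so by Arzela--Ascoli and a diagonal extraction there is a subsequence with $f_n\to f_\infty$ locally uniformly, and $f_\infty$ inherits (H2) and (H3) with the same constants. Since $\dot X$ is uniformly bounded we likewise get $X_n\to X_\infty$ locally uniformly. By Theorem~\ref{thm-tf}(iii), $(u_n)_x$ is uniformly bounded, and $(u_n)_t$ is bounded via the equation, so $\{u_n\}$ is locally equicontinuous; extracting a further subsequence, $u_n\to u_\infty$ locally uniformly. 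The limit $u_\infty$ solves the equation with $f_\infty$, is non-increasing in $x$ (inherited from strict monotonicity), inherits the Harnack-type bound of Lemma~\ref{lem-harnack-type}(ii), and is a transition front with interface $X_\infty$; Proposition~\ref{prop-regularity} then yields that $u_\infty$ is $C^1$ in $x$.

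The next step is to show $(u_\infty)_x(0,y_\infty)=0$. The function $v_n:=(u_n)_x$ satisfies $\partial_t v_n = J\ast v_n - v_n + f_{n,u}(t,u_n)v_n$, and integration by parts gives $J\ast v_n = J'\ast u_n$. Using the variation-of-constants representation of $v_n$ on intervals $[-T,0]$, together with the fact that $X_n(s)\to-\infty$ as $s\to-\infty$ (by monotonicity of $c^{\ka}$) so that $y_n-X_n(s)\to+\infty$ and hence $u_n(s,y_n)\to 0$, combined with (H3) on the region where $u_n\ge\theta_1$, one may pass to the limit by dominated convergence to obtain $v_n(0,y_n)\to v_\infty(0,y_\infty):=(u_\infty)_x(0,y_\infty)$, so $v_\infty(0,y_\infty)=0$.

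Finally, $v_\infty\le 0$ globally (inherited from the strict monotonicity of $u_n$) and $v_\infty(0,y_\infty)=0$ is a space-time maximum. Evaluating $\partial_t v_\infty = J\ast v_\infty - v_\infty + f_{\infty,u}(t,u_\infty)v_\infty$ at this maximum forces $(J\ast v_\infty)(0,y_\infty)=0$; with $J\ge 0$, $J\not\equiv 0$, and $v_\infty(0,\cdot)\le 0$, this pins $v_\infty(0,\cdot)=0$ on $y_\infty-\{J>0\}$. Each new zero is again a space-time maximum of $v_\infty$, so iterating and using that $\{J>0\}$ is open, symmetric (by evenness of $J$), and nonempty, successive Minkowski shifts cover all of $\R$, giving $v_\infty(0,\cdot)\equiv 0$. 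Thus $u_\infty(0,\cdot)$ is constant, contradicting the transition front limits $u_\infty(0,-\infty)=1$, $u_\infty(0,+\infty)=0$. The main obstacle will be the convergence $v_n(0,y_n)\to v_\infty(0,y_\infty)$, since the variation-of-constants formula has a potentially growing exponential factor on the region where $u_n$ is small; balancing it against the exponential decay of $u_n$ near $+\infty$ (Theorem~\ref{thm-tf}(ii)) and using (H3) where $u_n\ge\theta_1$ is the delicate part of the argument.
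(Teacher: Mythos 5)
The pivotal step of your argument — transferring $u_x(t_n,x_n)\to 0$ to $(u_\infty)_x(0,y_\infty)=0$ — is a genuine gap, and the repair you sketch does not close it. Locally uniform convergence of $u_n$ gives no control whatsoever on the pointwise convergence of $\partial_x u_n$: a sequence of decreasing functions can converge uniformly to a strictly decreasing $C^1$ limit while each member is nearly flat at $y_n$, and since the nonlocal equation has no smoothing there is no bound on $u_{xx}$ available to give equicontinuity of $v_n=\partial_x u_n$. Your variation-of-constants route does not fix this: on $[-T,0]$ one has
\begin{equation*}
v_n(0,y_n)=e^{\int_{-T}^{0}(-1+f_{n,u}(s,u_n(s,y_n)))\,ds}\,v_n(-T,y_n)+\int_{-T}^{0}e^{\int_{\tau}^{0}(-1+f_{n,u}(s,u_n(s,y_n)))\,ds}\,(J'\ast u_n)(\tau,y_n)\,d\tau ,
\end{equation*}
and while the forcing term passes to the limit (it involves only $u_n$), the first term does not go away: for $s\ll 0$ the point $y_n$ lies far ahead of the front, so $f_{n,u}(s,u_n(s,y_n))\approx a(s+t_n)$, and (H2) allows $a(t)\geq 1$ (e.g.\ $f=2u(1-u)$), so the exponential factor need not decay as $T\to\infty$ — it may grow — while for fixed $T$ the unknown $v_n(-T,y_n)$ is exactly the same kind of quantity you are trying to control, so the argument is circular. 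The sign $v_n\le 0$ gives only one-sided information, which cannot produce the two-sided convergence $v_n(0,y_n)\to v_\infty(0,y_\infty)$ your contradiction requires. (Secondary issues: writing the limit equation for $v_\infty$ needs $f_{n,u}\to f_{\infty,u}$, which does not follow from (H2) since no modulus of continuity of $f_u$ in $t$ is assumed; and Proposition \ref{prop-regularity} is stated for \eqref{main-eqn} with the given $f$, so it must be re-justified for the limit equation.)

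The paper's proof (by reference to \cite{ShSh14-3}) is direct and avoids limits entirely, and it is precisely where the one-sided information suffices: since $v=u_x\le 0$ and $-1+f_u\ge -(1+K)$ with $K=\sup|f_u|$, the same Duhamel formula yields $v(t,x)\le\int_{t-1}^{t}e^{-(1+K)(t-\tau)}(J\ast v)(\tau,x)\,d\tau$ (the initial term is discarded by its sign), and the uniformly bounded interface width gives, for $|x-X(t)|\le M$ and $\tau\in[t-1,t]$, $\int_{x-R}^{x+R}v(\tau,z)\,dz\le u(\tau,X(\tau)+d)-u(\tau,X(\tau)-d)\le -1/3$ for suitable $d$ and $R=R(M,d,\sup_t\dot X)$; if $J$ (or, after iterating the formula finitely many times, a convolution power of $J$) has a positive lower bound on $[-R,R]$, this gives $u_x(t,x)\le-\delta(M)<0$ uniformly in $t$. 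If you wish to keep your framework, the honest repair is to replace the limiting step by this quantitative estimate applied along the sequence $(t_n,x_n)$ — at which point the compactness argument and the maximum-principle iteration become unnecessary.
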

\begin{proof}
It can be proven along the same line as that of \cite[Theorem 3.1]{ShSh14-3}. So we omit it.
\end{proof}

Now, we prove Proposition \ref{prop-sub-sup-solution}

\begin{proof}[Proof of Proposition \ref{prop-sub-sup-solution}]
Here, we only show that $u^{-}(t,x)$ is a sub-solution; $u^{+}(t,x)$ being a sup-solution can be proven along the same line.

Note that by the space homogeneity of the equation \eqref{main-eqn}, we may assume, without loss of generality, that $\xi=0$. Hence, we assume
\begin{equation*}
u^{-}=u^{-}(t,x)=(1-\ep e^{-\om(t-\tau)})u(t,x-C\ep e^{-\om(t-\tau)}),\quad t\geq\tau,
\end{equation*}
where $\ep\in(0,\ep_{*}]$, $\om\in(0,\om_{*}]$ and $C>0$ (to be chosen).

With $N[u^{-}]:=u^{-}_{t}-[J\ast u^{-}-u^{-}]-f(t,u^{-})$, we compute
\begin{equation*}
\begin{split}
N[u^{-}]&=\ep e^{-\om(t-\tau)}[\om u+C\om(1-\ep e^{-\om(t-\tau)})u_{x}-f(t,u)+f_{u}(t,u_{*})u]\\
&\leq\ep e^{-\om(t-\tau)}[\om u+C\om(1-\ep_{*})u_{x}+f_{u}(t,u_{*})u],
\end{split}
\end{equation*}
where $u=u(t,x-C\ep e^{-\om(t-\tau)})$, $u_{x}=u_{x}(t,x-C\ep e^{-\om(t-\tau)})$ and $u_{*}\in[u^{-},u]$.

By Theorem \ref{thm-tf}$\rm(v)$, we find some $M_{0}>0$ such that
\begin{equation*}
\frac{u_{x}(t,x)}{u(t,x)}\leq-\frac{\ka}{2},\quad x\geq X(t)+M_{0},\quad t\in\R.
\end{equation*}
Let $M=M(\ep_{*})\geq M_{0}$ be such that
\begin{equation*}
u(t,x)\geq\frac{\theta_{1}}{1-\ep_{*}},\quad x\leq X(t)-M,\quad t\in\R.
\end{equation*}
By Lemma \ref{lem-uniform-steepness}, we have $C_{M}:=\sup_{t\in\R}\sup_{|x-X(t)|\leq M}u_{x}(t,x)<0$. Then, we choose $C=C(\ep_{*},\om)>0$ such that
\begin{equation*}
\begin{cases}
\om+C\om(1-\ep_{*})C_{M}+\sup_{(t,u)\in\R\times[0,1]}|f_{u}(t,u)|\leq0,\\
\om-\frac{C\om(1-\ep_{*})\ka}{2}+\sup_{(t,u)\in\R\times[0,1]}|f_{u}(t,u)|\leq0.
\end{cases}
\end{equation*}

Now, we consider three cases. If $x-C\ep e^{-\om(t-\tau)}-X(t)\leq-M$, we have $u\geq\frac{\theta_{1}}{1-\ep_{*}}$, and then, $u^{-}\geq(1-\ep e^{-\om(t-\tau)})\frac{\theta_{1}}{1-\ep_{*}}\geq\theta_{1}$. Thus, $f_{u}(t,u_{*})\leq-\beta_{1}$. Therefore, $\om\leq\beta_{1}$ leads to
\begin{equation*}
N[u^{-}]\leq\ep e^{-\om(t-\tau)}(\om+f_{u}(t,u_{*}))u\leq0.
\end{equation*}

If $x-C\ep e^{-\om(t-\tau)}-X(t)\in[-M,M]$, we have $u_{x}\leq C_{M}<0$. Hence, the choice of $C$ gives
\begin{equation*}
N[u^{-}]\leq\ep e^{-\om(t-\tau)}\Big[\om+C\om(1-\ep_{*})C_{M}+\sup_{(t,u)\in\R\times[0,1]}|f_{u}(t,u)|\Big]\leq0.
\end{equation*}

If $x-C\ep e^{-\om(t-\tau)}-X(t)\geq M$, we have $\frac{u_{x}}{u}\leq-\frac{\ka}{2}$. Then, the choice of $C$ leads to
\begin{equation*}
\begin{split}
N[u^{-}]&\leq \ep e^{-\om(t-\tau)}u\bigg[\om +C\om(1-\ep_{*})\frac{u_{x}}{u}+f_{u}(t,u_{*})\bigg]\\
&\leq\ep e^{-\om(t-\tau)}u\bigg[\om-\frac{C\om(1-\ep_{*})\ka}{2}+f_{u}(t,u_{*})\bigg]\leq0.
\end{split}
\end{equation*}

Hence, we have show $N[u^{-}]\leq0$, that is, $u^{-}$ is a sub-solution.
\end{proof}


\subsection{Proof of Theorem \ref{thm-asympt-stability}} This whole subsection is devoted to the proof of Theorem \ref{thm-asympt-stability}. Let $u_{0}$ be as in the statement of Theorem \ref{thm-asympt-stability} and $u(t,x;t_{0}):=u(t,x;t_{0},u_{0})$ be the solution of \eqref{main-eqn} with initial data $u(t_{0},\cdot;t_{0})=u_{0}$. We are going to show $\lim_{t\to\infty}\sup_{x\in\R}\big|\frac{u(t,x;t_{0})}{u(t,x)}-1\big|=0$, which is true if we can show
\begin{equation}\label{estimate-limsup}
\limsup_{t\to\infty}\sup_{x\in\R}\frac{u(t,x;t_{0})}{u(t,x)}\leq1
\end{equation}
and
\begin{equation}\label{estimate-liminf}
\liminf_{t\to\infty}\inf_{x\in\R}\frac{u(t,x;t_{0})}{u(t,x)}\geq1.
\end{equation}
Here, we only prove \eqref{estimate-limsup}; the proof of $\eqref{estimate-liminf}$ can be done along the same line except one that we comment after the proof of \eqref{estimate-limsup}.

To show \eqref{estimate-limsup}, we consider the set
\begin{equation*}
\Xi=\bigg\{\xi\geq0\bigg|\limsup_{t\to\infty}\sup_{x\in\R}\frac{u(t,x;t_{0})}{u(t,x-\xi)}\leq1\bigg\}.
\end{equation*}
We claim that $\Xi\neq\emptyset$. In fact, since $\lim_{x\to\infty}\frac{u_{0}(x)}{u(t_{0},x)}=1$, we have for any fixed $\ep_{*}\in(0,1-\theta_{1})$, $u_{0}(x)\leq(1+\ep_{*})u(t_{0},x)$ for all $x\gg1$. Then, we can find a large $\xi_{*}>0$ so that
\vspace{-0.04in}\begin{equation}\label{initial-estimate-95678}
u_{0}(x)\leq(1+\ep_{*})u(t_{0},x-\xi_{*}+C\ep_{*}), \quad x\in\R,
\vspace{-0.04in}\end{equation}
where $C=C(\ep_{*},\om_{*})$ is as in Proposition \ref{prop-sub-sup-solution}. Applying Proposition \ref{prop-sub-sup-solution}, we conclude
\vspace{-0.04in}\begin{equation*}
u(t,x;t_{0})\leq(1+\ep_{*}e^{-\om_{*}(t-t_{0})})u(t,x-\xi_{*}+C\ep_{*}e^{-\om_{*}(t-t_{0})}),\quad x\in\R
\vspace{-0.04in}\end{equation*}
for all $t\geq t_{0}$. This and the monotonicity of $u(t,x)$ yield $\limsup_{t\to\infty}\sup_{x\in\R}\frac{u(t,x;t_{0})}{u(t,x-\xi_{*})}\leq1$, that is, $\xi_{*}\in\Xi$.

Clearly, $\xi\in\Xi$ for all $\xi>\xi_{*}$. Set $\xi_{\inf}:=\inf\{\xi|\xi\in\Xi\}\geq0$. We claim that $\xi_{\inf}\in\Xi$. Indeed, let $\{\xi_{n}\}\subset\Xi$ and $\xi_{n}\to\xi_{\inf}$ as $n\to\infty$. We see
\begin{equation*}
\frac{u(t,x;t_{0})}{u(t,x-\xi_{\inf})}=\frac{u(t,x;t_{0})}{u(t,x-\xi_{n})}\bigg[1+\frac{u(t,x-\xi_{n})-u(t,x-\xi_{\inf})}{u(t,x-\xi_{\inf})}\bigg],
\end{equation*}
\begin{equation*}
\frac{u(t,x-\xi_{n})-u(t,x-\xi_{\inf})}{u(t,x-\xi_{\inf})}=\frac{u_{x}(t,x-\xi_{n}^{*})}{u(t,x-\xi_{\inf})}(\xi_{n}-\xi_{\inf}),\,\,\text{where}\,\,\xi_{n}^{*}\in[\xi_{\inf},\xi_{n}]
\end{equation*}
and
\begin{equation*}
\frac{u_{x}(t,x-\xi_{n}^{*})}{u(t,x-\xi_{\inf})}\leq\bigg[\sup_{(t,x)\in\R\times\R}\frac{|u_{x}(t,x)|}{u(t,x)}\bigg]\frac{u(t,x-\xi_{n}^{*})}{u(t,x-\xi_{\inf})}\leq\bigg[\sup_{(t,x)\in\R\times\R}\frac{|u_{x}(t,x)|}{u(t,x)}\bigg]\frac{e^{\ka|\xi_{n}^{*}-\xi_{\inf}|}}{h(1)},
\end{equation*}
where we used \eqref{harnack-type-tf}. Setting $C_{0}:=\frac{1}{h(1)}\sup_{(t,x)\in\R\times\R}\frac{|u_{x}(t,x)|}{u(t,x)}$, we then have
\begin{equation*}
\frac{u(t,x;t_{0})}{u(t,x-\xi_{\inf})}\leq\frac{u(t,x;t_{0})}{u(t,x-\xi_{n})}\big[1+C_{0}e^{\ka|\xi_{n}-\xi_{\inf}|}(\xi_{n}-\xi_{\inf})\big],
\end{equation*}
which leads to $\limsup_{t\to\infty}\sup_{x\in\R}\frac{u(t,x;t_{0})}{u(t,x-\xi_{\inf})}\leq1$, that is, $\xi_{\inf}\in\Xi$.

It remains to show that
\begin{equation}\label{shift-is-zero}
\xi_{\inf}=0.
\end{equation}
To do so, we need the following crucial result concerning the exact asymptotic of $u(t,x;t_{0})$ as $x\to\infty$.

\begin{lem}\label{lem-asymptotic-at-infty}
For any $\eta>0$, there exists $M=M(\eta)\gg1$ such that
\begin{equation}\label{sharp-asymptotics-key}
u(t,x+X(t)+\eta)\leq u(t,x+X(t);t_{0})\leq u(t,x+X(t)-\eta),\quad x\geq M
\end{equation}
for all $t\geq t_{0}$. Equivalently,
\begin{equation}\label{sharp-asymptotics-key-equiv}
\lim_{x\to\infty}\frac{u(t,x+X(t);t_{0})}{u(t,x+X(t))}=1\quad\text{uniformly in}\quad t\geq t_{0}
\end{equation}
\end{lem}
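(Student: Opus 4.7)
The plan is to prove the equivalent formulation \eqref{sharp-asymptotics-key-equiv}, from which \eqref{sharp-asymptotics-key} follows by sandwiching: Theorem \ref{thm-tf}$\rm(ii)$ gives $u(t,x+X(t)\mp\eta)/e^{-\kappa x}\to e^{\pm\kappa\eta}$ uniformly in $t\in\R$, so combined with $u(t,x+X(t);t_{0})/e^{-\kappa x}\to 1$ uniformly in $t\geq t_{0}$ and the fact that $e^{-\kappa\eta}<1<e^{\kappa\eta}$, the pointwise two-sided bound for $x$ large drops out. The whole proof hinges on picking an auxiliary exponent $\kappa'\in(\kappa,\inf_{t\in\R}\kappa_{0}(t))$, which is available thanks to \eqref{an-relation-parameter-ka}. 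Since $\mu\mapsto\dot{c}^{\mu}(t)$ is strictly decreasing on $(0,\kappa_{0}(t))$, the constraint $\kappa'<\inf_{t}\kappa_{0}(t)$ forces $\dot{c}^{\kappa}(t)>\dot{c}^{\kappa'}(t)$ for all $t\in\R$, hence $c^{\kappa}(t)-c^{\kappa'}(t)$ is strictly increasing in $t$. This produces the crucial uniform tail comparison
\begin{equation*}
\frac{\phi^{\kappa'}(t,x+X(t))}{\phi^{\kappa}(t,x+X(t))}=e^{-(\kappa'-\kappa)x-\kappa'(c^{\kappa}(t)-c^{\kappa'}(t))}\leq Ke^{-(\kappa'-\kappa)x}\quad\text{for all}\,\,t\geq t_{0},
\end{equation*}
which vanishes as $x\to\infty$ uniformly in $t\geq t_{0}$ (with $K$ depending only on $t_{0}$).

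For the upper bound I would use the super-solution $\Phi^{+}(t,x):=(1+\delta)\phi^{\kappa}(t,x)+B\phi^{\kappa'}(t,x)$: since both $\phi^{\kappa}$ and $\phi^{\kappa'}$ solve the linear equation $u_{t}=J\ast u-u+a(t)u$, so does $\Phi^{+}$, and the KPP bound $f(t,u)\leq a(t)u$ from (H2) makes it a super-solution of \eqref{main-eqn}. The initial hypothesis $u_{0}(x)/u(t_{0},x)\to 1$ together with Theorem \ref{thm-tf}$\rm(ii)$ gives $u_{0}(x)\leq(1+\delta)\phi^{\kappa}(t_{0},x)$ for $x\geq M_{\delta}$, and picking $B=B(\delta)$ large enough so that $B\phi^{\kappa'}(t_{0},x)\geq 1$ for $x\leq M_{\delta}$ ensures $u_{0}\leq\Phi^{+}(t_{0},\cdot)$ on $\R$. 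Comparison then yields $u(t,x;t_{0})\leq\Phi^{+}(t,x)$ for $t\geq t_{0}$, and evaluating at $y=x+X(t)$ using the tail estimate above gives $\limsup_{x\to\infty}\sup_{t\geq t_{0}}u(t,x+X(t);t_{0})/e^{-\kappa x}\leq 1+\delta$; sending $\delta\to 0$ closes the upper bound in \eqref{sharp-asymptotics-key-equiv}.

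For the lower bound I would use the nonlinear sub-solution $u^{-}(t,x)=(1-\epsilon e^{-\omega(t-\tau)})u(t,x+\eta-C\epsilon e^{-\omega(t-\tau)})$ from Proposition \ref{prop-sub-sup-solution} with $\xi=\eta$ for small $\eta>0$. KPP spreading driven by $\liminf_{x\to-\infty}u_{0}(x)>0$, combined with the uniform exponential stability of $1$ from (H3) via an ODE comparison, forces $u(\tau,x;t_{0})\geq 1-\epsilon$ on a wide left half-line $(-\infty,X_{0}(\tau)]$ for $\tau=\tau(\eta)\geq t_{0}$ sufficiently large; simultaneously, the upper bound just proved controls $u(\tau,\cdot;t_{0})$ in the right tail, and a crude right-tail lower bound can be extracted by comparing $u(\cdot;t_{0})$ below with a heavily rightward-shifted copy of the transition front. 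These inputs together allow one to verify $u^{-}(\tau,\cdot)\leq u(\tau,\cdot;t_{0})$ on $\R$: in the bulk $u^{-}\leq 1-\epsilon\leq u(\tau,\cdot;t_{0})$, while in the tail $u^{-}(\tau,x)\lesssim(1-\epsilon)e^{-\kappa\eta}e^{-\kappa(x-X(\tau))}$ sits beneath the crude lower bound. Proposition \ref{prop-sub-sup-solution} then propagates $u(t,x;t_{0})\geq u^{-}(t,x)$ for $t\geq\tau$, and evaluating at $y=x+X(t)$ yields $\liminf_{x\to\infty}\inf_{t\geq\tau}u(t,x+X(t);t_{0})/e^{-\kappa x}\geq(1-\epsilon)e^{-\kappa\eta}$. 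Letting $\epsilon,\eta\to 0$ pushes this to $1$, and the compact interval $[t_{0},\tau]$ is absorbed by continuous dependence together with the linear super-solution already furnishing uniform tail control there.

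The main obstacle I expect is the pointwise initial comparison $u^{-}(\tau,\cdot)\leq u(\tau,\cdot;t_{0})$ on all of $\R$ in the lower-bound step: the bulk inequality requires (H3)-driven spreading to have pushed $u(\cdot;t_{0})$ close enough to $1$ on a sufficiently wide left interval, the tail inequality requires a matching crude lower bound on $u(\tau,x;t_{0})$ as $x\to\infty$, and these two must be glued quantitatively across the transition zone around $X(\tau)$, with $\tau$ chosen large enough in a way that depends on $\eta,\epsilon,\delta$ in a coordinated manner.
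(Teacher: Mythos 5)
Your upper bound and the equivalence step are sound and essentially the paper's: the paper also exploits \eqref{an-relation-parameter-ka} to pick $\ka_{*}\in(\ka_{0},\inf_{t}\ka_{0}(t)]$ and majorizes $u(\cdot;t_{0})$ by $e^{-\ka(x-X(t))}$ plus a faster-decaying exponential correction (the paper lets the correction ride along with $X(t)$ and checks the super-solution inequality via the monotonicity of $\zeta\mapsto\frac{\int J e^{\zeta y}dy-1+a(t)}{\zeta}$ on $(0,\ka_{0}(t))$, while you use the exact solution $\phi^{\ka'}$ and absorb the speed discrepancy into the ratio estimate -- these are interchangeable). The genuine problem is your lower bound. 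The paper proves it with one explicit global sub-solution started at time $t_{0}$, namely $v(t,x)=h\big(e^{-\ka(x-X(t))}\big)-\de e^{-\ka_{*}(x-X(t))}$: the first term is the sub-solution $\psi^{\ka}$ of $u_{t}=J\ast u-u+a(t)g(u)$, the second is a super-solution of the linearization, and $g_{u}\le1$ makes the difference a sub-solution of \eqref{main-eqn}; since $v\to-\infty$ as $x\to-\infty$, the initial comparison $u_{0}(\cdot-\eta)\ge v(t_{0},\cdot)$ needs only the hypothesis at $+\infty$ (with $\de$ large handling the middle), and comparison then propagates the sharp tail lower bound uniformly for all $t\ge t_{0}$. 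Your proposal has no substitute for this object.

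Concretely, your route via Proposition \ref{prop-sub-sup-solution} cannot be closed with the inputs you cite. First, the ``crude right-tail lower bound by comparing with a heavily rightward-shifted copy of the front'' is unavailable at time $t_{0}$: the hypotheses allow $u_{0}$ to vanish on a large middle interval, so $u_{0}\ge u(t_{0},\cdot+A)$ fails on $\R$ for every finite $A$; and even if such a bound were obtained, its tail constant is of order $e^{-\ka A}\ll1$, which cannot dominate the tail of your $u^{-}$, whose amplitude $(1-\ep)e^{-\ka\eta}$ is close to $1$ -- so the tail half of the initial comparison $u^{-}(\tau,\cdot)\le u(\tau,\cdot;t_{0})$ is quantitatively false, not merely delicate. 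Second, the bulk half has the same defect: an ODE comparison using (H3) and $\liminf_{x\to-\infty}u_{0}>0$ only shows $u(\tau,\cdot;t_{0})\ge1-\ep$ on a region spreading at (at most) the minimal KPP-type speed, whereas $X(t)=c^{\ka}(t)$ with $\ka<\ka_{0}(t)$ moves strictly faster; without the sharp tail control (which is exactly what the lemma asserts) the gap between that bulk region and $X(\tau)$ grows in $\tau$, so the ``transition zone'' you propose to glue across has unbounded width. In other words, the needed lower bound with constant close to $1$ in the tail is the content of the lemma itself, and your argument is circular at that point; the faster-decaying-exponential subtraction from $\psi^{\ka}$ is the missing idea. (A smaller issue: on $[t_{0},\tau]$ continuous dependence plus the linear super-solution gives only upper, not lower, tail control, so uniformity in $t\ge t_{0}$ would also be left open.)
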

To show \eqref{estimate-limsup}, we actually only need the upper bound  in \eqref{sharp-asymptotics-key}; the lower bound in \eqref{sharp-asymptotics-key} and the limit \eqref{sharp-asymptotics-key-equiv} are needed for proving \eqref{estimate-liminf}. The proof of Lemma \ref{lem-asymptotic-at-infty} is postponed to Subsection \ref{subsec-proof-lemm-1}

Now, for contradiction, suppose \eqref{shift-is-zero} is false, that is, $\xi_{\inf}>0$. We are going to find a number in $\Xi$, but smaller than $\xi_{\inf}$.

To this end, let $\ep\in(0,\ep_{*})$ be sufficiently small, where $\ep_{*}\in(0,1-\theta_{1})$ is as in Proposition \ref{prop-sub-sup-solution}. Since $\xi_{\inf}\in\Xi$, i.e., $\limsup_{t\to\infty}\sup_{x\in\R}\frac{u(t,x;t_{0})}{u(t,x-\xi_{\inf})}\leq1$, for any $\ep_{1}>0$, we can find some $T=T(\ep_{1})$ such that $\frac{u(T,x;t_{0})}{u(T,x-\xi_{\inf})}<1+\ep_{1}$ for all $x\in\R$, which implies
\begin{equation*}\label{initial-condition-84294284284240539}
u(T,x;t_{0})< u(T,x-\xi_{\inf})+\ep_{1},\quad x\in\R.
\end{equation*}
In particular,
\begin{equation}\label{estimate-1359-1}
u(T,x+X(T);t_{0})<u(T,x+X(T)-\xi_{\inf})+\ep_{1},\quad x\in\R.
\end{equation}

Let $C=C(\ep_{*},\om_{*})$ be as in Proposition \ref{prop-sub-sup-solution}. We claim there exists $M_{1}>0$ such that
\begin{equation}\label{estimate-1359-2}
u(t,x+X(t))\leq (1+\ep)u(t,x+X(t)+3\ep C),\quad x\leq -M_{1},\quad t\in\R.
\end{equation}
Indeed, we have from Taylor expansion
\begin{equation*}
\begin{split}
&(1+\ep)u(t,x+X(t)+3\ep C)-u(t,x+X(t))\\
&\quad\quad=3\ep Cu_{x}(t,x+X(t)+x_{*})+\ep u(t,x+X(t)+3\ep C)\geq0,
\end{split}
\end{equation*}
where we used $u_{x}(t,x+X(t)+x_{*})\to0$ uniformly in $t\in\R$ as $x\to-\infty$ by Theorem \ref{thm-tf}$\rm(iv)$ and $u(t,x+X(t)+3\ep C)\to1$ uniformly in $t\in\R$ as $x\to-\infty$.

Now, applying \eqref{estimate-1359-2} with $t=T$, we deduce from \eqref{estimate-1359-1} that
\begin{equation}\label{estimate-1359-2-1}
u(T,x+X(T);t_{0})\leq(1+\ep)u(T,x+X(T)-\xi_{\inf}+3\ep C)+\ep_{1},\quad x\leq-M_{1}.
\end{equation}
Note that the inequality in \eqref{estimate-1359-1} is strict, which implies that if we choose $\ep$ so small that $3\ep C\ll1$, we will have
\begin{equation*}
u(T,x+X(T);t_{0})\leq u(T,x+X(T)-\xi_{\inf}+3\ep C)+\ep_{1},\quad x\in[-M_{1},M],
\end{equation*}
where $M=M(\frac{\xi_{\inf}}{2})\gg1$ is as in Lemma \ref{lem-asymptotic-at-infty}. This, together with \eqref{estimate-1359-2-1}, give
\begin{equation}\label{estimate-1359-3}
u(T,x+X(T);t_{0})\leq(1+\ep)u(T,x+X(T)-\xi_{\inf}+3\ep C)+\ep_{1},\quad x\leq M.
\end{equation}
Since $\inf_{t\in\R}\inf_{x\leq M}u(t,x+X(t)-\xi_{\inf}+1)>0$, we can take $\ep_{1}$ so small that
\begin{equation*}
\ep_{1}\leq\ep\inf_{t\in\R}\inf_{x\leq M}u(t,x+X(t)-\xi_{\inf}+1),
\end{equation*}
and then conclude from \eqref{estimate-1359-3} that
\begin{equation}\label{estimate-1359-4}
u(T,x+X(T);t_{0})\leq(1+2\ep)u(T,x+X(T)-\xi_{\inf}+3\ep C),\quad x\leq M.
\end{equation}

Using Lemma \ref{lem-asymptotic-at-infty}, we see that
\begin{equation}\label{estimate-1359-5}
u(T,x+X(T);t_{0})\leq u(T,x+X(T)-\frac{\xi_{\inf}}{2}),\quad x\geq M.
\end{equation}
If $\ep$ is so small that $3\ep C\leq\frac{\xi_{\inf}}{2}$, we have from \eqref{estimate-1359-5} that
\begin{equation*}
u(T,x+X(T);t_{0})\leq u(T,x+X(T)-\xi_{\inf}+3\ep C),\quad x\geq M.
\end{equation*}
This, together with \eqref{estimate-1359-4}, give
\begin{equation}\label{estimate-1359-6}
\begin{split}
u(T,x+X(T);t_{0})&\leq(1+2\ep)u(T,x+X(T)-\xi_{\inf}+3\ep C)\\
&=(1+2\ep)u(T,x+X(T)-\xi_{\inf}+\ep C+2\ep C),\quad x\in\R.
\end{split}
\end{equation}
We now apply Proposition \ref{prop-sub-sup-solution} to \eqref{estimate-1359-6} to conclude that
\begin{equation*}
\begin{split}
u(t,x+X(t);t_{0})&\leq(1+2\ep e^{-\om_{*}(t-T)})\\
&\quad\times u(t,x+X(t)-\xi_{\inf}+\ep C+2\ep Ce^{-\om_{*}(t-T)}),\quad x\in\R
\end{split}
\end{equation*}
for all $t\geq T$. From which and the monotonicity of $u(t,x)$ in $x$, we deduce
\begin{equation*}
\frac{u(t,x+X(t);t_{0})}{u(t,x+X(t)-\xi_{\inf}+\ep C)}\leq(1+2\ep e^{-\om_{*}(t-T)}),\quad x\in\R
\end{equation*}
for all $t\geq T$, which leads to $\limsup_{t\to\infty}\sup_{x\in\R}\frac{u(t,x;t_{0})}{u(t,x-\xi_{\inf}+\ep C)}\leq1$, that is, $\xi_{\inf}-\ep C\in\Xi$. It contradicts to the minimality of $\xi_{\inf}$. Hence, \eqref{shift-is-zero} holds, i.e., $\xi_{\inf}=0$.

Now, we comment on the proof of \eqref{estimate-liminf}. Define
\begin{equation*}
\tilde{\Xi}=\bigg\{\xi\geq0\bigg|\liminf_{t\to\infty}\inf_{x\in\R}\frac{u(t,x;t_{0})}{u(t,x-\xi)}\geq1\bigg\}.
\end{equation*}
To show $\tilde{\Xi}\neq\emptyset$, as in \eqref{initial-estimate-95678}, it seems that we have
\begin{equation*}
(1-\ep_{*})u(t_{0},x+\xi_{*}-C\ep_{*})\leq u_{0}(x),\quad x\in\R
\end{equation*}
for some $\ep_{*}\in(0,1-\theta_{1})$ and $\xi_{*}<0$, but it is wrong if $\limsup_{x\to-\infty}u_{0}(x)<\theta_{1}$. To overcome this, we only need to wait for a period of time, say $\tilde{T}=\tilde{T}(u_{0})>0$, so that $\liminf_{x\to-\infty}u(t_{0}+\tilde{T},x;t_{0})>\theta_{1}$. Then, at $t_{0}+\tilde{T}$, we still have limit $\lim_{x\to\infty}\frac{u(t_{0}+\tilde{T},x;t_{0})}{u(t_{0}+\tilde{T},x)}=1$ due to Lemma \ref{lem-asymptotic-at-infty}, which ensures
 \begin{equation*}
(1-\ep_{*})u(t_{0}+\tilde{T},x+\xi_{*}-C\ep_{*})\leq u(t_{0}+\tilde{T},x;t_{0}), \quad x\in\R
\end{equation*}
for some $\ep_{*}\in(0,1-\theta_{1})$ and $\xi_{*}<0$. Applying Proposition \ref{prop-sub-sup-solution}, we then see that $\tilde{\Xi}\neq\emptyset$. The rest of the proof follows along the same line. This completes the proof.


\subsection{Proof of Lemma \ref{lem-asymptotic-at-infty}}\label{subsec-proof-lemm-1}

Recall $u(t,x)=u^{\ka}(t,x)$ and $X(t)=c^{\ka}(t)$ for some fixed $\ka\in(0,\ka_{0}]$. Fix $\ka_{*}\in(\ka_{0},\inf_{t\in\R}\ka_{0}(t)]$, where $\ka_{0}(t)$ is given in \eqref{minimizer-148}. We prove the lemma within two steps.
\medskip

\noindent {\textbf{Step 1.}} We prove \eqref{sharp-asymptotics-key}. Fix any $\eta>0$. We first prove the upper bound for $u(t,x;t_{0})$. Since $\lim_{x\to\infty}\frac{u_{0}(x)}{u(t_{0},x)}=1$ by assumption and $\lim_{x\to\infty}\frac{u(t_{0},x+X(t_{0}))}{e^{-\ka x}}=1$ by Theorem \ref{thm-tf}$\rm(ii)$, we can find some $\de=\de(\eta)\gg1$ such that
\begin{equation*}
u_{0}(x+\eta)\leq e^{-\ka(x-X(t_{0}))}+\de e^{-\ka_{*}(x-X(t_{0}))},\quad x\in\R.
\end{equation*}
Recall that $e^{-\ka(x-X(t))}$ is a solution of $u_{t}=J\ast u-u+a(t)u$. Setting
\begin{equation*}
v(t,x)=e^{-\ka(x-X(t))}+\de e^{-\ka_{*}(x-X(t))},\quad x\in\R,\quad t\geq t_{0},
\end{equation*}
we readily check that
\begin{equation*}
v_{t}-[J\ast v-v]-a(t)v=\ka_{*}\bigg[\frac{\int_{\R}J(y)e^{\ka y}dy-1+a(t)}{\ka}-\frac{\int_{\R}J(y)e^{\ka_{*}y}dy-1+a(t)}{\ka_{*}}\bigg].
\end{equation*}
Since the function $\zeta\mapsto\frac{\int_{\R}J(y)e^{\zeta y}dy-1+a(t)}{\zeta}$ is convex in $\zeta$ and decreasing on $(0,\ka_{0}(t))$, and $\ka\leq\ka_{0}<\ka_{*}\leq\inf_{t\in\R}\ka_{0}(t)$, we have
\begin{equation*}
\frac{\int_{\R}J(y)e^{\ka y}dy-1+a(t)}{\ka}\geq\frac{\int_{\R}J(y)e^{\ka_{*}y}dy-1+a(t)}{\ka_{*}},\quad t\in\R.
\end{equation*}
Hence, $v_{t}-[J\ast v-v]-a(t)v\geq0$. In particular, $v(t,x)$ is a super-solution of \eqref{main-eqn}. It then follows from comparison principle and the space homogeneity of \eqref{main-eqn} that $u(t,x+\eta;t_{0})\leq v(t,x)$ for all $x\in\R$ and $t\geq t_{0}$, that is,
\begin{equation*}
\frac{u(t,x+X(t);t_{0})}{e^{-\ka(x-\eta)}}\leq1+\de e^{-(\ka_{*}-\ka)(x-\eta)},\quad x\in\R
\end{equation*}
for $t\geq t_{0}$,
which, together with $\lim_{x\to\infty}\frac{u(t,x+X(t))}{e^{-\ka x}}=1$ by Theorem \ref{thm-tf}$\rm(ii)$, lead to
\begin{equation*}
\frac{u(t,x+X(t);t_{0})}{u(t,x+X(t)-\eta)}\leq(1+\de e^{-(\ka_{*}-\ka)(x-\eta)})(1+\zeta(x-\eta)),\quad x\in\R
\end{equation*}
for $t\geq t_{0}$, where $\zeta(x)\geq0$ satisfies $\zeta(x)\to0$ as $x\to\infty$. It then follows that
\begin{equation*}
\frac{u(t,x+X(t);t_{0})}{u(t,x+X(t)-2\eta)}\leq\frac{u(t,x+X(t)-\eta)}{u(t,x+X(t)-2\eta)}(1+\de e^{-(\ka_{*}-\ka)(x-\eta)})(1+\zeta(x-y)).
\end{equation*}
Since
\begin{equation*}
\begin{split}
\frac{u(t,x+X(t)-\eta)}{u(t,x+X(t)-2\eta)}&=\frac{u(t,x+X(t)-\eta)}{e^{-\ka(x-\eta)}}\frac{e^{-\ka(x-\eta)}}{e^{-\ka(x-2\eta)}}\frac{e^{-\ka(x-2\eta)}}{u(t,x+X(t)-2\eta)}\\
&\to e^{-\ka\eta}<1\quad\text{as}\quad x\to\infty,
\end{split}
\end{equation*}
and $\eta>0$ is arbitrary, the upper bound for $u(t,x;t_{0})$ follows.

We now prove the lower bound for $u(t,x;t_{0})$. Since $\lim_{x\to\infty}\frac{u_{0}(x)}{u(t_{0},x)}=1$ by assumption and $\lim_{x\to\infty}\frac{u(t_{0},x)}{h(e^{-\ka(x-X(t_{0}))})}=1$ by \eqref{asymptotic-9564}, we can find some $\de=\de(\ep)\gg1$ such that
\begin{equation*}
u_{0}(x-\eta)\geq h(e^{-\ka(x-X(t_{0}))})-\de e^{-\ka_{*}(x-X(t_{0}))},\quad x\in\R.
\end{equation*}
Setting
\begin{equation*}
v_{1}(t,x)=h(e^{-\ka(x-X(t))})\quad\text{and}\quad v_{2}(t,x)=e^{-\ka_{*}(x-X(t))}
\end{equation*}
for $x\in\R$ and $t\geq t_{0}$. By Proposition \ref{prop-subsol}, $v_{1}(t,x)$ is a sub-solution of $u_{t}=J\ast u-u+a(t)g(u)$, i.e., $(v_{1})_{t}\leq J\ast v_{1}-v_{1}+a(t)g(v_{1})$. Arguing as in the proof of the upper bound above, we see that
\begin{equation*}
\begin{split}
&(v_{2})_{t}-[J\ast v_{2}-v_{2}]-a(t)v_{2}\\
&\quad\quad=\ka_{*}\bigg[\frac{\int_{\R}J(y)e^{\ka y}dy-1+a(t)}{\ka}-\frac{\int_{\R}J(y)e^{\ka_{*}y}dy-1+a(t)}{\ka_{*}}\bigg]\geq0.
\end{split}
\end{equation*}
It then follows that $v(t,x)=v_{1}(t,x)-\de v_{2}(t,x)$ satisfies
\begin{equation*}
\begin{split}
v_{t}-[J\ast v-v]&=\big((v_{1})_{t}-[J\ast v_{1}-v_{1}]\big)-\de\big((v_{2})_{t}-[J\ast v_{2}-v_{2}]\big)\\
&\leq a(t)g(v_{1})-\de a(t)v_{2}\leq a(t)g(v),
\end{split}
\end{equation*}
where we used $g(v_{1})-g(v)=\de v_{2}g_{u}(v_{*})\leq\de v_{2}$, since $g_{u}\leq1$ (note that it's safe to extend $g$ to $(-\infty,0)$ so that $g_{u}\leq1$ on this interval). Since $g(u)\leq f(t,u)$, we find $v_{t}\leq J\ast v-v+f(t,u)$. Then, by comparison principle, $u(t,x-\eta;t_{0})\geq v(t,x)$ for all $x\in\R$ and $t\geq t_{0}$, that is,
\begin{equation*}
\frac{u(t,x+X(t);t_{0})}{h(e^{-\ka(x+\eta)})}\geq1-\de\frac{e^{-\ka_{*}(x+\ep)}}{h(e^{-\ka(x+\eta)})},\quad x\in\R
\end{equation*}
for $t\geq t_{0}$, which, together with \eqref{asymptotic-9564}, lead to
\begin{equation*}
\frac{u(t,x+X(t);t_{0})}{u(t,x+X(t)+\eta)}\geq\bigg(1-\de\frac{e^{-\ka_{*}(x+\eta)}}{h(e^{-\ka(x+\eta)})}\bigg)(1-\zeta(x+\eta)),\quad x\in\R
\end{equation*}
for all $t\geq t_{0}$, where $\zeta(x)$ satisfies $\zeta(x)\to0$ as $x\to\infty$. Since $\ka_{*}>\ka$ and $\lim_{x\to\infty}\frac{e^{-\ka x}}{h(e^{-\ka x})}=1$, we have $\lim_{x\to\infty}\frac{e^{-\ka_{*}x}}{h(e^{-\ka x})}=0$. Then, a similar argument as in the proof of the upper bound gives the lower bound for $u(t,x;t_{0})$. This finishes the proof of \eqref{sharp-asymptotics-key}.

\medskip

\noindent {\textbf{Step 2.}} We prove the equivalence between \eqref{sharp-asymptotics-key} and \eqref{sharp-asymptotics-key-equiv}.
Suppose first that \eqref{sharp-asymptotics-key} holds. Then,
\begin{equation*}
\frac{u(t,x+X(t)+\eta)}{u(t,x+X(t))}\leq\frac{u(t,x+X(t);t_{0})}{u(t,x+X(t))}\leq\frac{u(t,x+X(t)-\eta)}{u(t,x+X(t))},\,\, x\geq M,\,\, t\geq t_{0}.
\end{equation*}
We see that
\begin{equation*}
\begin{split}
\frac{u(t,x+X(t)-\eta)}{u(t,x+X(t))}&=1-\frac{u_{x}(t,x+X(t)-\eta_{*})}{u(t,x+X(t)-\eta_{*})}\frac{u(t,x+X(t)-\eta_{*})}{u(t,x+X(t))}\eta\\
&\leq1+\bigg[\sup_{(t,x)\in\R\times\R}\frac{|u_{x}(t,x)|}{u(t,x)}\bigg]\frac{e^{\ka|\eta_{*}|}}{h(1)}\eta,
\end{split}
\end{equation*}
where $\eta_{*}\in[0,\eta]$ and we used \eqref{harnack-type-tf}. Similarly, we argue
\begin{equation*}
\frac{u(t,x+X(t)+\eta)}{u(t,x+X(t))}\geq1-\bigg[\sup_{(t,x)\in\R\times\R}\frac{|u_{x}(t,x)|}{u(t,x)}\bigg]\frac{e^{\ka|\eta_{*}|}}{h(1)}\eta.
\end{equation*}
Setting $\tilde{C}=\frac{1}{h(1)}\big[\sup_{(t,x)\in\R\times\R}\frac{|u_{x}(t,x)|}{u(t,x)}\big]$, we obtain
\begin{equation*}
1-\tilde{C}e^{\ka|\eta_{*}|}\eta\leq\frac{u(t,x+X(t);t_{0})}{u(t,x+X(t))}\leq1+\tilde{C}e^{\ka|\eta_{*}|}\eta,\quad x\geq M,\quad t\geq t_{0}.
\end{equation*}
Setting $x\to\infty$, we find
\begin{equation*}
1-\tilde{C}e^{\ka|\eta_{*}|}\eta\leq\liminf_{x\to\infty}\frac{u(t,x+X(t);t_{0})}{u(t,x+X(t))}\leq\limsup_{x\to\infty}\frac{u(t,x+X(t);t_{0})}{u(t,x+X(t))}\leq1+\tilde{C}e^{\ka|\eta_{*}|}\eta,\quad t\geq t_{0},
\end{equation*}
which leads to \eqref{sharp-asymptotics-key-equiv} since $\eta>0$ is arbitrary.

Conversely, suppose \eqref{sharp-asymptotics-key-equiv} holds. We only show the upper bound for $u(t,x;t_{0})$ in \eqref{sharp-asymptotics-key}; the lower bound can be verified similarly. By \eqref{sharp-asymptotics-key-equiv}, for any small $\eta>0$, there exists $M(\eta)>0$ such that
\begin{equation*}
\frac{u(t,x+X(t);t_{0})}{u(t,x+X(t))}\leq\frac{1}{1-\frac{\ka}{8}\eta e^{-\ka\eta}},\quad x\geq M(\eta),\quad t\geq t_{0}.
\end{equation*}
We claim that
\begin{equation}\label{claim-equiv}
\begin{cases}
\exists M_{1}(\eta)>0\,\,\text{s.t.}\,\, u(t,x+X(t))\leq(1-\frac{\ka}{8}\eta e^{-\ka\eta})u(t,x+X(t)-\eta)\\
\quad\quad \text{for all}\,\, x\geq M_{1}(\eta)\,\,\text{and}\,\, t\geq t_{0}.
\end{cases}
\end{equation}
Clearly, the result then follows from \eqref{claim-equiv}. Hence, it remains to show \eqref{claim-equiv}. We see that
\begin{equation*}
\begin{split}
&\frac{u(t,x+X(t))}{u(t,x+X(t)-\eta)}\\
&\quad\quad=1-\bigg[\frac{-u_{x}(t,x+X(t)-\eta_{*})}{u(t,x+X(t)-\eta_{*})}\frac{u(t,x+X(t)-\eta_{*})}{e^{-\ka(x-\eta_{*})}}\frac{e^{-\ka(x-\eta_{*})}}{e^{-\ka(x-\eta)}}\frac{e^{-\ka(x-\eta)}}{u(t,x+X(t)-\eta)}\eta\bigg],
\end{split}
\end{equation*}
where $\eta_{*}\in[0,\eta]$. Clearly, $\frac{e^{-\ka(x-\eta_{*})}}{e^{-\ka(x-\eta)}}=e^{-\ka(\eta-\eta_{*})}\geq e^{-\ka\eta}$. Moreover, by Theorem \ref{thm-tf}, we can find some $M_{1}(\eta)>0$ such that for all $x\geq M_{1}(\eta)$ and $t\geq t_{0}$, there hold the following
\begin{equation*}
\frac{-u_{x}(t,x+X(t)-\eta_{*})}{u(t,x+X(t)-\eta_{*})}\geq\frac{\ka}{2},\quad\frac{u(t,x+X(t)-\eta_{*})}{e^{-\ka(x-\eta_{*})}}\geq\frac{1}{2}\quad\text{and}\quad\frac{e^{-\ka(x-\eta)}}{u(t,x+X(t)-\eta)}\geq\frac{1}{2}.
\end{equation*}
It follows that $\frac{u(t,x+X(t))}{u(t,x+X(t)-\eta)}\leq1-\frac{\ka}{8}\eta e^{-\ka\eta}$ for all $x\geq M_{1}(\eta)$ and $t\geq t_{0}$, that is, \eqref{claim-equiv} is true, and the proof is complete.


\subsection{Stability: general cases}\label{subsec-stability-general}

We prove the following generalizations of Theorem \ref{thm-asympt-stability}.

\begin{cor}\label{cor-stability-general}
Assume (H1)-(H3). Let $\ka_{0}$ be as in Theorem \ref{thm-tf}. For $\ka\in(0,\ka_{0}]$, let $u^{\ka}(t,x)$ be the transition front in Theorem \ref{thm-tf}. Let $u_{0}:\R\to[0,1]$ be uniformly continuous and satisfy $\liminf_{x\to-\infty}u_{0}>0$ and $\lim_{x\to\infty}\frac{u_{0}(x)}{u^{\ka}(t_{0},x)}=\la$ for some $t_{0}\in\R$ and $\la\in(0,\infty)$. Then, there holds the limit
\begin{equation*}
\lim_{t\to\infty}\sup_{x\in\R}\bigg|\frac{u(t,x;t_{0},u_{0})}{u^{\ka}(t,x-\frac{\ln\la}{\ka})}-1\bigg|=0.
\end{equation*}
\end{cor}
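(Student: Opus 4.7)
\textbf{Proof proposal for Corollary \ref{cor-stability-general}.} The plan is to reduce the general case $\la>0$ to the already-established case $\la=1$ (Theorem \ref{thm-asympt-stability}) by a single spatial translation, exploiting the fact that \eqref{main-eqn} is invariant under translations in $x$. Specifically, set
\begin{equation*}
\tilde u(t,x):=u^{\ka}\Big(t,x-\tfrac{\ln\la}{\ka}\Big),\qquad (t,x)\in\R\times\R.
\end{equation*}
Since the operator $v\mapsto J\ast v-v+f(t,v)$ is translation-invariant in $x$, $\tilde u$ is itself a global-in-time solution of \eqref{main-eqn}. Moreover, $\tilde u$ is a transition front with interface location function $\tilde X(t)=X^{\ka}(t)+\frac{\ln\la}{\ka}$ and inherits every one of the properties $\rm(i)$--$\rm(v)$ of Theorem \ref{thm-tf} verbatim, simply by the change of variable $x\mapsto x-\frac{\ln\la}{\ka}$.

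Next, I would verify that $u_{0}$ is admissible for Theorem \ref{thm-asympt-stability} relative to $\tilde u$ in place of $u^{\ka}$. The condition $\liminf_{x\to-\infty}u_{0}(x)>0$ is intrinsic and unaffected. For the right-tail condition, Theorem \ref{thm-tf}$\rm(ii)$ gives
\begin{equation*}
\lim_{x\to\infty}\frac{u^{\ka}(t_{0},x-\tfrac{\ln\la}{\ka})}{u^{\ka}(t_{0},x)}=\lim_{x\to\infty}\frac{u^{\ka}(t_{0},x-\tfrac{\ln\la}{\ka})/e^{-\ka(x-\frac{\ln\la}{\ka}-X^{\ka}(t_{0}))}}{u^{\ka}(t_{0},x)/e^{-\ka(x-X^{\ka}(t_{0}))}}\cdot e^{\ln\la}=\la,
\end{equation*}
so that
\begin{equation*}
\lim_{x\to\infty}\frac{u_{0}(x)}{\tilde u(t_{0},x)}=\lim_{x\to\infty}\frac{u_{0}(x)}{u^{\ka}(t_{0},x)}\cdot\frac{u^{\ka}(t_{0},x)}{u^{\ka}(t_{0},x-\tfrac{\ln\la}{\ka})}=\la\cdot\frac{1}{\la}=1.
\end{equation*}
Hence the pair $(\tilde u,u_{0})$ satisfies exactly the hypotheses of Theorem \ref{thm-asympt-stability}.

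Finally, I would observe that the proof of Theorem \ref{thm-asympt-stability} uses $u^{\ka}$ only through: the structural properties $\rm(i)$--$\rm(v)$ of Theorem \ref{thm-tf}, the sub-/super-solution construction of Proposition \ref{prop-sub-sup-solution} (which is manifestly translation-compatible, since it operates on $u^{\ka}$ through shifts by $\xi\in\R$), and the asymptotic refinement of Lemma \ref{lem-asymptotic-at-infty} (whose proof also depends only on those same structural properties together with the super-solution $e^{-\ka(x-X(t))}+\de e^{-\ka_{*}(x-X(t))}$, which translates with the front). Every ingredient is therefore preserved when $u^{\ka}$ is replaced by $\tilde u$ and $X^{\ka}$ by $\tilde X$. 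Applying Theorem \ref{thm-asympt-stability} in this shifted form yields
\begin{equation*}
\lim_{t\to\infty}\sup_{x\in\R}\bigg|\frac{u(t,x;t_{0},u_{0})}{\tilde u(t,x)}-1\bigg|=\lim_{t\to\infty}\sup_{x\in\R}\bigg|\frac{u(t,x;t_{0},u_{0})}{u^{\ka}(t,x-\tfrac{\ln\la}{\ka})}-1\bigg|=0,
\end{equation*}
which is precisely the claimed limit. The only potential obstacle is checking that Theorem \ref{thm-asympt-stability} was really proved in sufficient generality to apply to an arbitrary spatial translate of $u^{\ka}$; I expect this to be immediate from inspection, since the proof never singles out the specific value $X^{\ka}(0)$ and uses only qualitative features of the front.
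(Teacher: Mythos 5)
Your proposal is correct and follows essentially the same route as the paper: reduce to Theorem \ref{thm-asympt-stability} by using Theorem \ref{thm-tf}$\rm(ii)$ to show $\lim_{x\to\infty}u_{0}(x)/u^{\ka}(t_{0},x-\frac{\ln\la}{\ka})=1$, and then invoke the stability theorem for the shifted front via the spatial homogeneity of \eqref{main-eqn}. Your extra care in checking that the proof of Theorem \ref{thm-asympt-stability} is translation-compatible is a point the paper leaves implicit (equivalently one can translate the initial data instead of the front), but it is the same argument.
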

\begin{proof}
Write $u(t,x)=u^{\ka}(t,x)$. To apply Theorem \ref{thm-asympt-stability}, we only need to show $\lim_{x\to\infty}\frac{u_{0}(x)}{u(t_{0},x-\frac{\ln\la}{\ka})}=1$, which follows from
\begin{equation*}
\frac{u_{0}(x)}{u(t_{0},x-\frac{\ln\la}{\ka})}=\frac{u_{0}(x)}{u(t_{0},x)}\frac{u(t_{0},x)}{e^{-\ka(x-X(t_{0}))}}\frac{e^{-\ka(x-X(t_{0}))}}{e^{-\ka(x-X(t_{0})-\frac{\ln\la}{\ka})}}\frac{e^{-\ka(x-X(t_{0})-\frac{\ln\la}{\ka})}}{u(t_{0},x-\frac{\ln\la}{\ka})},
\end{equation*}
and
\begin{equation*}
\lim_{x\to\infty}\frac{u(t_{0},x)}{e^{-\ka(x-X(t_{0}))}}=1=\lim_{x\to\infty}\frac{e^{-\ka(x-X(t_{0})-\frac{\ln\la}{\ka})}}{u(t_{0},x-\frac{\ln\la}{\ka})}
\end{equation*}
thanks to Theorem \ref{thm-tf}(ii).
\end{proof}

\begin{cor}\label{cor-stability-more-general}
Assume (H1)-(H3). Let $\ka_{0}$ be as in Theorem \ref{thm-tf}. For $\ka\in(0,\ka_{0}]$, let $u^{\ka}(t,x)$ be the transition front in Theorem \ref{thm-tf}. Let $u_{0}:\R\to[0,1]$ be uniformly continuous and satisfy $\liminf_{x\to-\infty}u_{0}>0$ and $\lim_{x\to\infty}\frac{u_{0}(x)}{e^{-\ka x}}=\la$ for some $\la>0$. Then, there exists $t_{0}\in\R$ such that
\begin{equation*}
\lim_{t\to\infty}\sup_{x\in\R}\bigg|\frac{u(t,x;t_{0},u_{0})}{u^{\ka}(t,x)}-1\bigg|=0.
\end{equation*}
\end{cor}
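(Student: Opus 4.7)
The strategy is to reduce Corollary \ref{cor-stability-more-general} to Theorem \ref{thm-asympt-stability} by selecting $t_0$ so that the hypothesis $\lim_{x\to\infty}\frac{u_{0}(x)}{u^{\ka}(t_{0},x)}=1$ of that theorem is satisfied, at which point one may quote Theorem \ref{thm-asympt-stability} verbatim.

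First, I would use Theorem \ref{thm-tf}$\rm(ii)$, which gives
\[
\lim_{x\to\infty}\frac{u^{\ka}(t,x+X^{\ka}(t))}{e^{-\ka x}}=1 \quad\text{uniformly in } t\in\R,
\]
i.e.\ $u^{\ka}(t,x)=e^{-\ka(x-X^{\ka}(t))}(1+o(1))$ as $x\to\infty$, uniformly in $t$. Combined with the hypothesis $\lim_{x\to\infty}\frac{u_{0}(x)}{e^{-\ka x}}=\la$, this yields, for every fixed $t\in\R$,
\[
\lim_{x\to\infty}\frac{u_{0}(x)}{u^{\ka}(t,x)}=\la\, e^{-\ka X^{\ka}(t)}.
\]
So the task is to locate a $t_0$ for which $\la e^{-\ka X^{\ka}(t_0)}=1$, equivalently $X^{\ka}(t_0)=\frac{\ln\la}{\ka}$.

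Second, I would verify that $X^{\ka}:\R\to\R$ is a homeomorphism. From the explicit formula $X^{\ka}(t)=c^{\ka}(t)=\frac{\int_{\R}J(y)e^{\ka y}dy-1}{\ka}t+\frac{1}{\ka}\int_{0}^{t}a(s)ds$, one has $\dot{c}^{\ka}(t)=\frac{\int_{\R}J(y)e^{\ka y}dy-1+a(t)}{\ka}$. Since $J$ is symmetric with unit mass, Jensen's inequality applied to the convex function $e^{\ka y}$ yields $\int_{\R}J(y)e^{\ka y}dy\geq 1$, hence $\dot{c}^{\ka}(t)\geq\frac{a_{\inf}}{\ka}>0$ by (H2). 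Therefore $X^{\ka}$ is continuous, strictly increasing and satisfies $\lim_{t\to\pm\infty}X^{\ka}(t)=\pm\infty$. By the intermediate value theorem there is a unique $t_0\in\R$ with $X^{\ka}(t_0)=\frac{\ln\la}{\ka}$, and for this choice we have $\lim_{x\to\infty}\frac{u_0(x)}{u^{\ka}(t_0,x)}=1$.

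Finally, the remaining hypothesis of Theorem \ref{thm-asympt-stability}, namely $\liminf_{x\to-\infty}u_{0}(x)>0$, is already assumed. Applying Theorem \ref{thm-asympt-stability} with this $u_0$ and this $t_0$ gives the stated conclusion. The proof has essentially no obstacle: all analytical work is inherited from Theorem \ref{thm-asympt-stability}, and the only genuine content is the bookkeeping lemma that the map $t\mapsto X^{\ka}(t)$ hits every value in $\R$ exactly once, which is immediate from the Jensen bound above. If one prefers, the same argument can be framed as an application of Corollary \ref{cor-stability-general} (with any reference time, and with $\la$ replaced by $\la e^{-\ka X^{\ka}(\cdot)}$), but the direct reduction to Theorem \ref{thm-asympt-stability} is cleaner.
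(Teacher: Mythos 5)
Your proposal is correct and follows essentially the same route as the paper: compute $\lim_{x\to\infty}\frac{u_{0}(x)}{u^{\ka}(t,x)}=\la e^{-\ka X^{\ka}(t)}$ via Theorem \ref{thm-tf}(ii), use that $X^{\ka}$ is continuous, increasing and tends to $\pm\infty$ to pick the unique $t_{0}$ with $\la e^{-\ka X^{\ka}(t_{0})}=1$, and then invoke Theorem \ref{thm-asympt-stability}. Your Jensen-inequality justification of the strict monotonicity of $X^{\ka}$ merely fills in a detail the paper states without proof.
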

\begin{proof}
Write $u(t,x)=u^{\ka}(t,x)$. By assumption, we see that for any $t\in\R$, there holds the limit $\lim_{x\to\infty}\frac{u_{0}(x)}{e^{-\ka(x-X(t))}}=\la e^{-\ka X(t)}$. Then, by Theorem \ref{thm-tf}(ii), for any $t\in\R$, we have the limit $\lim_{x\to\infty}\frac{u_{0}(x)}{u(t,x)}=\la e^{-\ka X(t)}$. Since $X(t)$ is continuous, increasing and satisfies $\lim_{t\to\pm\infty}X(t)=\pm\infty$, there exists a unique $t_{0}\in\R$ such that $\la e^{-\ka X(t_{0})}=1$, which implies that $\lim_{x\to\infty}\frac{u_{0}(x)}{u(t_{0},x)}=1$. We then apply Theorem \ref{thm-asympt-stability} to conclude the result.
\end{proof}

\section*{Acknowledgements}
The authors would like to thank anonymous referees for carefully reading the manuscript and providing useful suggestions.


\bibliographystyle{amsplain}

\end{document}